\newcommand{\CM}{\overline{\operatorname{\mathcal{M}}}}
\newcommand{\IM}{\operatorname{\mathcal{M}}}
\newcommand{\BB}{\operatorname{\mathcal{B}}}
\newcommand{\EE}{\operatorname{\mathcal{E}}}
\newcommand{\LL}{\operatorname{\mathcal{L}}}
\newcommand{\PP}{\operatorname{\mathfrak{P}}}
\newcommand{\Tor}{\operatorname{Tor}}
\newcommand{\CP}{\mathbb{C}\mathbb{P}^1}
\newcommand{\CR}{\bar{\partial}}
\newcommand{\Aut}{\operatorname{Aut}}
\newcommand{\SFT}{\operatorname{SFT}}
\newcommand{\ev}{\operatorname{ev}}
\newcommand{\Crit}{\operatorname{Crit}}
\newcommand{\cst}{\operatorname{const}}
\newcommand{\CZ}{\operatorname{CZ}}
\newcommand{\ind}{\operatorname{ind}}
\newcommand{\coker}{\operatorname{coker}}
\newcommand{\del}{\partial}
\newcommand{\RS}{\IR \times S^1}
\newcommand{\IC}{\operatorname{\mathbb{C}}}
\newcommand{\IZ}{\operatorname{\mathbb{Z}}}
\newcommand{\IR}{\operatorname{\mathbb{R}}}
\newcommand{\IN}{\operatorname{\mathbb{N}}}
\newcommand{\IG}{\operatorname{\mathbf{G}}}
\newcommand{\Ih}{\operatorname{\mathbf{h}}}
\newcommand{\Ig}{\operatorname{\mathbf{g}}}
\newcommand{\If}{\operatorname{\mathbf{f}}}
\newcommand{\Coker}{\operatorname{Coker}}
\newtheorem{theorem}{Theorem}[section]
\newtheorem{proposition}[theorem]{Proposition}
\newtheorem{definition}[theorem]{Definition}
\newtheorem{corollary}[theorem]{Corollary}
\title{Local symplectic field theory}
\author{Oliver Fabert}
\begin{document}

\maketitle

\begin{abstract}
Generalizing local Gromov-Witten theory, in this paper we define a local version of symplectic field theory. When the symplectic manifold with cylindrical ends is four-dimensional and the underlying simple curve is regular by automatic transversality, we establish a transversality result for all its multiple covers and discuss the resulting algebraic structures. 
\end{abstract}

\tableofcontents

\markboth{O. Fabert}{Local SFT} 

\section*{Introduction}
In this paper we define a local version of Eliashberg-Givental-Hofer's symplectic field theory (SFT), see \cite{EGH}. It provides a topological quantum theory approach to local Gromov-Witten theory in the same way as standard SFT provides a topological quantum field theory approach to standard Gromov-Witten theory. While in local Gromov-Witten theory one counts multiple covers over a fixed closed holomorphic curve, see \cite{LP}, \cite{BP}, in local SFT we count multiple covers over punctured holomorphic curves. \\

Instead of getting invariants for contact manifolds, we now get the invariants for closed Reeb orbits that were already studied in \cite{F2} and \cite{F3}. Note that for the orbit curves we used an infinitesimal energy estimate to show that multiple covers 
of orbit cylinders are isolated in the moduli space of holomorphic curves. In this paper we show that the dimension bounds on the kernel of the linearized Cauchy-Riemann operator established in \cite{Wen} (using positivity of intersections in dimension four) can be used to obtain the required isolatedness result for rational multiple covers when the underlying simple (rational) curve is sufficiently nice.

\begin{theorem} 
Assume that the rigid holomorphic curve $v:S\to X$ is immersed and that all asymptotic orbits are Morse nondegenerate and elliptic. If $\ind u=0$ for rational multiple covers $u=v\circ\varphi$ in $\IM_{v,d}(\Gamma^+,\Gamma^-)$, then every infinitesimal deformation of $u$ as a holomorphic curve is again a multiple cover of $v$. Furthermore the cokernels of the linearized Cauchy-Riemann operator $\CR_J$ fit together to a smooth \emph{obstruction bundle} $\overline{\Coker}_v\CR_J=\overline{\Coker}_{v,d}(\Gamma^+,\Gamma^-)$ over the compactified moduli space $\CM_v=\CM_{v,d}(\Gamma^+,\Gamma^-)$. 
\end{theorem}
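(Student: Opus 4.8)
The plan is to reduce both assertions to a statement about the \emph{normal} part of the linearized Cauchy--Riemann operator $\CR_J$ and then to feed in the kernel dimension bounds of \cite{Wen}. Since $v$ is immersed, $v^*TX$ splits along $v$ as $TS\oplus N_v$ with $N_v$ a complex line bundle, so pulling back by $\varphi$ gives $u^*TX\cong T\dot\Sigma\oplus N_u$, $N_u=\varphi^*N_v$, where $\dot\Sigma$ is the punctured sphere underlying $u=v\circ\varphi$. With respect to this splitting $\CR_J$ at $u$ is block triangular, the tangential subbundle $du(T\dot\Sigma)$ being invariant, with diagonal blocks the tangential operator $\CR_J^T$ and the normal operator $\CR_J^N$, the latter a real-linear Cauchy--Riemann operator on the line bundle $N_u$ whose asymptotic operators at the punctures are the normal parts of the asymptotic operators of the orbits in $\Gamma^\pm$. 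As $v$ is rigid and regular it has no nontrivial deformations, so every element of $\ker\CR_J^T$ comes from varying the branched cover $\varphi$ (its branch points together with the conformal structure of $\dot\Sigma$), i.e.\ is an infinitesimal multiple cover of $v$; and projecting $\ker\CR_J$ onto the $N_u$-factor shows $\ker\CR_J=\ker\CR_J^T$ precisely when $\ker\CR_J^N=0$. Thus the first assertion is equivalent to the vanishing $\ker\CR_J^N=0$ for every $u\in\IM_{v,d}(\Gamma^+,\Gamma^-)$.

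Next I would do the index bookkeeping. From $\ind u=0$ and $\ind\CR_J=\ind\CR_J^T+\ind\CR_J^N$ one computes $\ind\CR_J^N$: the tangential index is governed by Riemann--Hurwitz for $\varphi$ and by the tangential contributions to the Conley--Zehnder indices, and since every asymptotic orbit is \emph{elliptic}, these contributions — as well as the extremal asymptotic winding numbers of $\CR_J^N$ at the punctures — are pinned down, parities included, by the rotation numbers of the orbits and the local branching orders of $\varphi$. This yields an explicit, suitably small upper bound for $\ind\CR_J^N$ (morally $-(\text{total branching of }\varphi)$, corrected by the rotation numbers).

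For the vanishing, suppose $0\neq\xi\in\ker\CR_J^N$. By positivity of intersections in dimension four — the geometric input of \cite{Wen} — $\xi$ has only finitely many zeros, each of positive order, and a well-defined asymptotic winding number at every puncture, bounded by the extremal winding of the corresponding elliptic normal asymptotic operator. Feeding the index bound of the previous step into the relative Euler number identity relating the algebraic zero count of $\xi$, $c_1(N_u)$ and these windings, together with the standard dimension estimate for kernels of Cauchy--Riemann operators on line bundles over a genus-zero domain (which for the simple curve $v$ is exactly Wendl's automatic transversality), forces $\xi$ to vanish, a contradiction. The point is that the elliptic winding/index bookkeeping survives passage to the cover precisely because $\ind u=0$; this proves the first assertion.

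It remains to build the obstruction bundle. By the first assertion together with the regularity of $v$, the open moduli space $\IM_v=\IM_{v,d}(\Gamma^+,\Gamma^-)$ is locally the smooth manifold of degree-$d$ branched covers of $v$ with the prescribed branching data over the orbits, with tangent space $\ker\CR_J$ at each point; since $\ind\CR_J=0$ the cokernels then have constant dimension $\dim\IM_v$ and, choosing locally a fixed finite-dimensional complement of the image of $\CR_J$, form a smooth vector bundle over $\IM_v$. To extend it over the SFT compactification $\CM_v=\CM_{v,d}(\Gamma^+,\Gamma^-)$ one runs the gluing analysis near each boundary stratum, whose points are holomorphic buildings of multiple covers sitting over a broken configuration of $v$: the linearized operator of a glued curve is obtained from those of the levels by pre-gluing plus Newton iteration, and one checks that no extra kernel appears in the limit — a putative nonzero limiting section would again be a section of a line bundle with controlled zeros and windings, excluded as in the vanishing step applied level by level — so the cokernel dimension is unchanged across $\partial\CM_v$ and the bundles over the strata patch to the smooth bundle $\overline{\Coker}_v\CR_J$ over $\CM_v$. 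I expect the main difficulties to be, conceptually, making the elliptic index and winding bookkeeping of the third step close up for \emph{every} degree-$d$ cover of index zero, and, technically, the gluing analysis of the last step needed to rule out a jump of the cokernel rank along $\partial\CM_v$.
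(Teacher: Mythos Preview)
Your reduction of the first assertion to $\ker D_u^N=0$ via the tangent/normal splitting and Wendl's line-bundle estimates is exactly the paper's approach. The paper packages your index bookkeeping more cleanly: since the genus is zero and every asymptotic orbit (being elliptic, hence odd) remains odd under covers, one has $\#\Gamma_0=0$ and thus $2c_N(u)=\ind u-2+2g+\#\Gamma_0=\ind u-2$; combined with $c_1(N_u)=c_N(u)-\#\Crit(u)$ and $\ind u\le 2\,\#\Crit(\varphi)$ this gives $c_1(N_u)<0$ directly, after which $\ker D_u^N=0$ is a one-line citation of \cite{Wen}, Proposition~2.2(1). No separate tracking of rotation numbers or asymptotic windings is needed. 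One small correction: your identification $N_u=\varphi^*N_v$ is not literally Wendl's normal bundle (the latter is defined so as to absorb the critical points of $u$), but since $v$ is immersed the two agree up to the branch divisor and the conclusion is unaffected.

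Your argument for the extension over $\CM_v$ has a genuine gap. A boundary point is a two-level building $(u^+,u^0)$ (or $(u^0,u^-)$) in which only $u^0$ is a cover of $v$; the other level $u^\pm$ is a branched cover of an \emph{orbit cylinder} $\IR\times\gamma^\pm$, and for that level the Wendl-type positivity argument is not available. The paper handles $u^\pm$ by the energy argument of \cite{F2} (orbit curves have zero contact area, forcing $T_{u^\pm}\IM^0_{\gamma^\pm}=\ker D_{u^\pm}$). Equally important, to re-run the Wendl argument on the $v$-level $u^0$ you need $\ind u^0\le 0$, not $=0$; since $\ind u^++\ind u^0=0$, this requires $\ind u^+\ge 0$, which the paper obtains from Long's results on Conley--Zehnder indices of multiply covered orbits in dimension four. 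Your sketch ``apply the vanishing step level by level'' misses both of these ingredients; once they are in place, the linear gluing of \cite{F2} yields constancy of the cokernel rank across the boundary as you indicate.
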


Using these obstruction bundles we can solve the transversality problem for multiply-covers of immersed curves with elliptic orbits without employing the polyfold machinery from \cite{HWZ}, see (\cite{MDSa}, section 7.2) for the general approach. 

\begin{proposition} Let $\nu$ be a section in the cokernel bundle $\Coker\CR_J\subset\EE|_{\IM}$ over the moduli space $\IM=\CR_J^{-1}(0)\subset\BB$, which is extended (using parallel transport and cut-off functions, as described in \cite{F2},\cite{MDSa},\cite{LP}) to a section in the full Banach space bundle $\EE\to\BB$. Then it holds:
\begin{itemize} 
\item The perturbed moduli space $\IM^{\nu}=(\CR_J+\nu)^{-1}(0)$ agrees with the zero set of $\nu$ in $\IM$,   
\begin{equation*} \IM^{\nu}=\nu^{-1}(0). \end{equation*}
\item If $\nu$ is a transversal section in $\Coker\CR_J$, then $\CR^{\nu}_J$ is a transversal section in $\EE$, i.e., $\IM^{\nu}$ is regular.
\item The linearization of $\nu$ at every zero is a compact operator, so that the linearizations of $\CR_J$ and $\CR_J^{\nu}$ belong to the same class of Fredholm operators.
\end{itemize}
\end{proposition}

We then show how this can be used to define morphisms in the local version of Eliashberg-Givental-Hofer's symplectic field theory introduced by the author in \cite{F2}. More precisely, we show that immersed holomorphic curves with elliptic asymptotic orbits in four-dimensional symplectic cobordisms define morphisms between the local SFT invariants assigned to their asymptotic closed Reeb orbits. While in standard SFT one collects the information about all moduli spaces of holomorphic curves in $X$ by defining a potential $\If$, we now define a local SFT potential $\If_v\in\LL_{\Gamma'^+,\Gamma'^-}$ counting only multiple covers of the fixed rigid immersed curve with elliptic orbits $v:S\to X$.

\begin{definition} For every choice of obstruction bundle sections $(\bar{\nu})$ coherently connecting the coherent collections of obstruction bundle sections $(\bar{\nu}^{\pm})$ chosen for all positive and negative asymptotic Reeb orbits $\gamma^{\pm}\in\Gamma'^{\pm}$ of $v$, we define the local SFT potential of a rigid immersed holomorphic curve $v$ with elliptic orbits by $$\If_v\stackrel{!}{=}\If_v^{(\bar{\nu})}=\sum_{\Gamma^+,\Gamma^-} \frac{1}{s^+!s^-!\kappa_{\Gamma^+}\kappa_{\Gamma^-}}\#\CM^{\bar{\nu}}_{v,d}(\Gamma^+,\Gamma^-) q_-^{\Gamma^-} p_+^{\Gamma^+},$$ where $\CM^{\bar{\nu}}_{v,d}(\Gamma^+,\Gamma^-)=\bar{\nu}^{-1}(0)\subset\CM_{v,d}(\Gamma^+,\Gamma^-)$. \end{definition} 

Furthermore we will use the result in \cite{F2} to discuss how the algebraic count of multiple covers of a immersed curve with elliptic orbits depends on all the auxiliary choices. Here we prove the following 

\begin{theorem} Assume that the coherent collections of sections $(\bar{\nu}_{\pm})$ are fixed for all asymptotic Reeb orbits $\gamma^{\pm}\in\Gamma'^{\pm}$ of $v$. Then the local SFT potential $\If_v=\If_v^{(\bar{\nu})}$ of $v$ is independent of the chosen collection of sections $(\bar{\nu})$ coherently connecting $(\bar{\nu}^+)$ and $(\bar{\nu}^-)$. In particular, the algebraic count of multiple covers of the immersed curve with elliptic orbits $v$ is well-defined. \end{theorem}

At the end we illustrate how the new local SFT invariants can be used to obtain richer obstructions against stable embeddings of hypersurfaces in four-dimensional symplectic manifolds. After introducing additional marked points and gravitational descendants (translated into branching conditions as in \cite{F3}), we reprove the following result due to Welschinger.

\begin{theorem} Assume that a closed oriented Lagrangian surface $L$ in a closed symplectic four-manifold has a homologically nontrivial intersection with an exceptional sphere $\Sigma$. Then $L$ must be diffeomorphic to $S^2$ or $S^1\times S^1$. \end{theorem}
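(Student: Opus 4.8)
The plan is to deduce the topological restriction on $L$ from a neck-stretching argument applied to the exceptional sphere, feeding the resulting top-level punctured curve into the local SFT machinery developed above. First I would reduce the statement to the inequality $\chi(L)\ge 0$: since $L$ is closed and oriented, $\chi(L)\ge 0$ forces $L\cong S^2$ or $L\cong S^1\times S^1$. Suppose, for contradiction, that $\chi(L)<0$, i.e.\ $L$ has genus at least two, and equip $L$ with a hyperbolic metric $g$. By the Weinstein neighbourhood theorem a tubular neighbourhood of $L$ is symplectomorphic to a disc cotangent bundle $D^*_\epsilon L$ whose boundary $(S^*L,\xi_{\mathrm{geo}})$ carries the geodesic Reeb flow of $g$; all of its closed Reeb orbits are then Morse nondegenerate, \emph{hyperbolic}, and noncontractible in $L$. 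On the symplectic side, the exceptional class $E=[\Sigma]$ satisfies $\langle\omega,E\rangle>0$ and $E\cdot E=-1$, so by the Taubes--Li--Liu computation of Gromov--Witten invariants of symplectic four-manifolds the class $E$ is represented, for generic tame $J$, by a unique embedded $J$-holomorphic sphere, with signed count $\pm1$. Imposing one generic point constraint together with a branching/gravitational-descendant marked point (translated as in \cite{F3}) cuts this moduli space down to a rigid one while keeping the count $\pm1$, and because $E\cdot[L]\neq0$ the representative must meet $L$.

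Next I would stretch the neck of $X$ along the contact hypersurface $S^*L=\partial D^*_\epsilon L$. SFT compactness produces a holomorphic building whose top level lives in the completion $\widehat{X\setminus D^*_\epsilon L}$ (a symplectic manifold with a negative cylindrical end over $S^*L$), whose bottom level lives in the completion $T^*L$ of $D^*_\epsilon L$, with finitely many symplectization levels $\IR\times S^*L$ in between; the building is connected, of arithmetic genus $0$, and represents $E$ after gluing. Because $E\cdot[L]\neq 0$ the building is genuinely broken and has a nonconstant component in $T^*L$ meeting the zero section. Since $\pi_2(T^*L)=\pi_2(L)=0$ for genus $\ge1$, there are no nonconstant holomorphic spheres downstairs, so every $T^*L$ component carries at least one positive puncture asymptotic to a closed geodesic $\gamma$ and therefore, by Stokes, has strictly positive $\lambda$-energy bounded by a multiple of $\mathrm{length}_g(\gamma)$; the homotopy class of such a component records a relation among noncontractible classes in $\pi_1(L)$, which together with the area bound (on shrinking $\epsilon$) severely restricts the possible caps.

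I would then apply the results of this paper to the top-level curve $v\colon S\to\widehat{X\setminus D^*_\epsilon L}$. The point and branching constraints force $v$ to be rigid with $\ind v=0$; for generic data it is immersed, its underlying simple curve is regular by the automatic-transversality dimension bounds of \cite{Wen}, and its asymptotic orbits are among the closed geodesics of $g$. But this is exactly the situation covered by the Main Theorem only when those orbits are elliptic, whereas over a hyperbolic surface they are all hyperbolic; combining the Conley--Zehnder index contributions of hyperbolic orbits with the positive action cost of the mandatory $T^*L$ caps, the total index of any such broken configuration is forced to be negative, so the compactified moduli space $\CM_{v,d}(\Gamma^+,\Gamma^-)$ carrying the count is empty, contradicting that the deformation-invariant Gromov--Witten count in class $E$ equals $\pm1$. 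Hence $\chi(L)\ge 0$, and $L$ is diffeomorphic to $S^2$ or $S^1\times S^1$. On the complementary side $\chi(L)\ge 0$ the argument is consistent: choosing a Katok-type metric on $S^2$ or a suitably perturbed flat metric on $S^1\times S^1$ makes the relevant asymptotic geodesics elliptic and Morse nondegenerate, so the machinery of the preceding sections genuinely applies there.

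The main obstacle I expect is the bookkeeping in this neck-stretching step: the geodesic flow of a hyperbolic surface has infinitely many closed orbits, so one must control precisely which orbit families occur as asymptotics, how the arithmetic genus $0$ is distributed among the building levels, and --- most delicately --- that the extra point and branching constraints on the exceptional sphere really descend through the stretching to pin down the top component, so that the nonvanishing $\pm1$ of the Gromov--Witten count is faithfully transferred to a local count governed by the obstruction bundle $\overline{\Coker}_v\CR_J$. A secondary technical point is arranging the genericity needed above (the simple top curve immersed and regular by automatic transversality, all asymptotic orbits Morse nondegenerate) simultaneously with the fixed hyperbolic metric on $L$; this should follow from standard perturbations of the almost complex structure away from $S^*L$ together with a Morse--Bott-to-Morse perturbation of the geodesic Reeb flow.
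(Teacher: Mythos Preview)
Your high-level strategy --- reduce to $\chi(L)\ge 0$, put a hyperbolic metric on $L$, stretch the neck along $S^*L$, and look for a contradiction with the fact that all closed geodesics are hyperbolic --- matches the paper's. But the mechanism you propose for the contradiction is not the one the paper uses, and as written it has a genuine gap.

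The paper does \emph{not} argue via an index-negativity or emptiness claim for the building. Instead it fixes a very specific local Gromov--Witten descendant, namely the count $\#(\CM_{v,2,1}^1)^{\bar{\nu}}$ of double covers of the exceptional sphere with one marked branch point, and computes it directly (via the divisor equation, topological recursion, and the identification $\overline{\Coker}\CR_J\cong N_v$) to be $-\tfrac14\neq 0$. Then it places the special marked point on the intersection locus $\Sigma\cap V$, so that after neck-stretching the marked branch point sits on an \emph{orbit cylinder} $\IR\times\gamma$ rather than on the top or bottom piece. The contribution of these middle levels is exactly what is encoded by the descendant Hamiltonian $\Ih^1_{\gamma,1}$ from \cite{F3}, and the key input is the computation there that $\Ih^1_{\gamma,1}=0$ whenever $\gamma$ is hyperbolic. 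Thus, if every breaking orbit were hyperbolic, the stretched count would be zero, contradicting $-\tfrac14$. That is the whole argument; the Main Theorem about obstruction bundles over $\CM_{v,d}(\Gamma^+,\Gamma^-)$ for elliptic orbits is not invoked here at all.

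Your step ``combining the Conley--Zehnder index contributions of hyperbolic orbits with the positive action cost of the mandatory $T^*L$ caps, the total index of any such broken configuration is forced to be negative, so the compactified moduli space \ldots\ is empty'' is where the proposal breaks down. Action does not enter the Fredholm index, and even if some component had negative index that would not make the perturbed count vanish: a nontrivial obstruction bundle can give a nonzero Euler number. You also try to feed the top-level curve $v$ into the Main Theorem, but that theorem is precisely unavailable when the asymptotics are hyperbolic, so you cannot conclude anything from it in that case; the paper circumvents this by pushing the marked point onto the symplectization level, where the relevant local invariant is the orbit Hamiltonian $\Ih^1_{\gamma,1}$, whose vanishing for hyperbolic $\gamma$ is an independent result. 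Finally, the Taubes--Li--Liu count $\pm1$ in class $E$ is not the invariant being stretched; without the descendant/branching constraint the double-cover moduli space has positive virtual dimension, so there is nothing to contradict. The missing ingredients in your outline are therefore (i) the explicit nonzero descendant computation $-\tfrac14$, (ii) placing the marked point on $\Sigma\cap V$ so the stretched count localizes on orbit curves, and (iii) the vanishing $\Ih^1_{\gamma,1}=0$ for hyperbolic $\gamma$.
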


This paper is organized as follows: After recalling the basic definitions and results of symplectic field theory in subsection 1.1, we discuss the ideas and main definitions of its local 
version in subsection 1.2. While in subsection 1.3 we use the results in \cite{Wen} to prove the desired isolatedness result for multiple covers which also proves the existence 
of a smooth finite-dimensional obstruction bundle over the corresponding moduli spaces, we show in 1.4 how the transversality problem for the Cauchy-Riemann operator can be solved 
by choosing transversal and coherent sections in these bundles and discuss in 1.5 how the resulting algebraic count of multiple covers depends on these auxiliary choices. In section 
two we then show how local SFT methods can be applied to stable embedding problems of hypersurfaces in symplectic blow-ups. After introducing gravitational descendants via branching conditions in 2.1, we explicitly compute a contribution to the local Gromov-Witten descendant potential of an exceptional sphere using topological recursion in 2.2. In subsection 2.3 we then use our computation to prove equations for the local SFT potential and finally prove our obstruction to stable embeddings of hypersurfaces in 2.4. \\

The author is deeply indebted to Chris Wendl for explaining to him his work on automatic transversality in long discussions, which were the starting point for this project. Furthermore he wants to thank Kai Cieliebak for his ideas concerning the generalization of the result in \cite{F3} and also thanks E. Ionel, T. Parker and C. Taubes for interesting discussions on this topic during his stay at the MSRI in Berkeley. This paper was written when the author was a research assistant at the University of Augsburg. He is grateful for the financial support and the great working environment and also thanks Prof. K. Wendland's ERC Starting Independent Researcher Grant (StG No. 204757-TQFT) for further support.

\section{Local symplectic field theory}

\subsection{Symplectic field theory}
Symplectic field theory was defined by Eliashberg, Givental and Hofer in their paper \cite{EGH} and is designed to describe in a unified way the theory of pseudoholomorphic curves in symplectic and contact topology. In particular, it defines a functor from a geometric category to an algebraic category. The objects of the geometric category are contact manifolds (more generally, manifolds with stable Hamiltonian structure) $V$ of dimension $2n-1$ ($n\geq 1$) , while the morphisms from one contact manifold $V^-$ to another contact manifold $V^+$ are strong symplectic cobordisms $X$ of dimension $2n$ from $V^-$ to $V^+$, that is, strong symplectic fillings of the disconnected union $-V^-\cup V^+$. \\

The functor SFT defines invariants for contact manifolds $V$, denoted by $\SFT(V)$,  by counting $J$-holomorphic curves in cylindrical manifolds $\IR\times V$ equipped with a compatible almost complex structure $J$, which is cylindrical in the sense that it is $\IR$-invariant, preserves the contact distribution, $\xi=\ker\lambda=TV\cap JTV$, and maps the $\IR$-direction to the Reeb vector field $R\in\ker d\lambda$, $\lambda(R)=1$. For this recall that a contact one-form $\lambda$ defines a vector field $R$ on $V$ by 
$R\in\ker d\lambda$ and $\lambda(R)=1$, which is called the Reeb vector field. \emph{Throughout the paper we assume that 
the contact form is Morse in the sense that all closed orbits of the 
Reeb vector field are (Morse) nondegenerate in the sense that one is not an eigenvalue of the linearized return map; in particular, the set 
of closed Reeb orbits is discrete.} The invariants are defined by counting
$J$-holomorphic curves $u$ in $\IR\times V$. Let $\Gamma^+,\Gamma^-$ be two ordered sets of closed (unparametrized) orbits $\gamma$ of the Reeb vector field $R$ on $V$. Note further that in this paper \emph{we just restrict to the case of rational holomorphic curves.} Then the (parametrized) moduli space $\IM^0_{V,A}(\Gamma^+,\Gamma^-)$ consists of tuples $(u,j)$, where $j$ is a complex structure on the sphere $S=S^2-\{z^{\pm}_1,...,z^{\pm}_{s^{\pm}}\}$ with $s=s^++s^-$ punctures ($s^{\pm}=\#\Gamma^{\pm}$) removed and maps $u:(S,j) \to(\IR\times V,J)$  satisfying the Cauchy-Riemann equation 
\begin{equation*}
 \CR_J u = du + J(u) \cdot du \cdot j = 0.
\end{equation*}
Assuming we have chosen cylindrical holomorphic coordinates $\psi^{\pm}_k: \IR^{\pm} 
\times S^1 \to (S,j)$ around each puncture $z^{\pm}_k$ in the sense that $\psi_k^{\pm}(\pm\infty,t)=z_k^{\pm}$, 
the map $u$ is additionally required to show for all $k=1,...,n^{\pm}$ the 
asymptotic behaviour
\begin{equation*}
 \lim_{s\to\pm\infty} (u \circ \psi^{\pm}_k) (s,t+t_0) = 
 (\pm \infty,\gamma^{\pm}_k(T^{\pm}_kt))
\end{equation*}
with some $t_0\in S^1$ and the orbits $\gamma^{\pm}_k\in \Gamma^{\pm}$, where $T^{\pm}_k>0$ denotes period of $\gamma^{\pm}_k$. In particular, note that in the asymptotic condition is independent of the parametrization of the closed Reeb orbit. In order to assign an \emph{absolute} homology class $A\in H_2(V)$ to a holomorphic curve $u:(S,j)\to(\IR\times V,J)$ we have to 
employ spanning surfaces $u_{\gamma}$ connecting a given closed Reeb orbit $\gamma$ in $V$ to a linear combination of 
circles $c_s$ representing a basis of $H_1(V)$, $\del u_{\gamma} = \gamma - \sum_s n_s\cdot c_s$ in order to define 
$A = [u_{\Gamma^+}] + [u(S)] - [u_{\Gamma^-}]$, where $[u_{\Gamma^{\pm}}] = \sum_{n=1}^{s^{\pm}} [u_{\gamma^{\pm}_n}]$ viewed as singular chains. \\

Observe that when the number of punctures is less than three the corresponding subgroup $\Aut(S,j)$ with $(S,j)=\RS, \IC, \CP$ of the group of Moebius transformations acts on elements in $\IM^0_{V,A}(\Gamma^+,\Gamma^-)$ in an obvious way, 
\begin{equation*} \varphi.(u,j) = (u\circ\varphi^{-1},j),\;\;\;\varphi\in\Aut(S,j), \end{equation*}
and we obtain the moduli space $\IM=\IM_{V,A}(\Gamma^+,\Gamma^-)$ studied in symplectic field theory by dividing out this action and the natural $\IR$-action on the target manifold $(\IR\times V,J)$. \\    

To every strong symplectic cobordism $X$ from $V^-$ to $V^+$, the functor SFT assignes morphisms $\SFT(X)$ from the invariant $\SFT(V^-)$ to the invariant $\SFT(V^+)$ by counting $J$-holomorphic curves in $X$, where the $\omega$-compatible almost complex structure $J$ agrees with $J^{\pm}$ on the cylindrical ends $\IR^{\pm}\times V^{\pm}$ of $X$. For the latter observe that here and in what follows we do not distinguish between the strong symplectic filling and its (non-compact) completion. Indeed, let $X=(X,\omega)$ be a symplectic manifold with cylindrical ends $(\IR^+\times V^+,\lambda^+)$ and 
$(\IR^-\times V^-,\lambda^-)$ in the sense of (\cite{BEHWZ}, section 3) which is equipped with an almost complex structure 
$J$ which agrees with the cylindrical almost complex structures $J^{\pm}$ on $\IR^+\times V^+$. Then we 
study $J$-holomorphic curves $u:(S,j)\to(X,J)$ which are asymptotically cylindrical over 
chosen collections of orbits $\Gamma^{\pm}=\{\gamma^{\pm}_1,...,
\gamma^{\pm}_{n^{\pm}}\}$ of the Reeb vector fields $R^{\pm}$ in $V^{\pm}$ as the $\IR^{\pm}$-factor tends 
to $\pm\infty$, see \cite{BEHWZ}. We now denote by $\IM^0_{X,A}(\Gamma^+,\Gamma^-)$, $\IM_{X,A}(\Gamma^+,\Gamma^-)$ and $\CM_{X,A}(\Gamma^+,\Gamma^-)$ the corresponding moduli spaces of rational curves in $X$, where it is important to note that for passing from $\IM^0_{X,A}(\Gamma^+,\Gamma^-)$ to $\IM_{X,A}(\Gamma^+,\Gamma^-)$ we do no longer divide out a symmetry on target anymore. After choosing abstract perturbations using polyfolds as described above, we again find that $\CM_{X,A}(\Gamma^+,\Gamma^-)$ is a weighted branched manifold with boundaries and corners of dimension equal to the Fredholm index of the Cauchy-Riemann operator for $J$. \\

Furthermore it was shown in (\cite{BEHWZ}, theorem 10.1 and 10.2) that $\IM_{V,A}(\Gamma^+,\Gamma^-)$ and $\IM_{X,A}(\Gamma^+,\Gamma^-)$ can be compactified to moduli spaces $\CM_{V,A}=\CM_{V,A}(\Gamma^+,\Gamma^-)$ and $\CM_{X,A}=\CM_{X,A}(\Gamma^+,\Gamma^-)$ by adding moduli space of multi-floor curves with nodes. After choosing abstract perturbations using polyfolds (see \cite{HWZ}) we get that $\CM_{V,A}$ and $\CM_{X,A}$ is a branched-labelled manifold with boundaries and corners of dimension 
equal to the Fredholm index of the Cauchy-Riemann operator for $J$ (minus one in the first case, where the latter accounts for dividing out the one-dimensional $\IR$-action on the target). In particular, the moduli spaces $\CM_{V,A}$ and $\CM_{X,A}$ have codimension-one boundary given by (fibre) products $\CM_{V,A_1}\times\CM_{V,A_2}$ and $\CM_{X,A_1}\times\CM_{V^-,A_2}\cup\CM_{V^+,A_1}\times\CM_{X,A_2}$ ($A_1+A_2=A$) of lower-dimensional moduli spaces, respectively. \\

\subsection{A local version of symplectic field theory}

Note that for $n=1$ the one-dimensional contact manifold $V$ consists of a copies of circles, while a two-dimensional symplectic cobordism from $V^-$ to  $V^+$ is nothing else but a Riemann surface $S$ with $s^-$ negative and $s^+$ positive punctures, i.e., points removed, where $s^{\pm}$ denotes the number of components of $V^{\pm}$. While the SFT-invariants for $V=S^1$ count branched coverings of the cylinder $\RS$, the morphism $\SFT(S)$ is defined by counting branched coverings of $S$.  \\ 

While for $n=1$ the SFT functor is easily understood, researchers were looking for computable examples, which can be viewed as an intermediate step between the case of Riemann surfaces and the case of general symplectic manifolds. And indeed, it is well-known in Gromov-Witten theory that one can define a \emph{local} version  of it, see \cite{BP} and \cite{LP}, by counting multiple covers of a fixed (simple) rigid $J$-holomorphic curve $v: (S^2,i)\to (X,J)$. Indeed it can be shown that under certain assumptions on $v$ the submoduli spaces $\CM_{v,d}$ of $d$-fold branched coverings $u=v\circ\varphi: (S^2,i)\to(S^2,i)\to(X,J)$ is a connected component of the moduli space $\CM_{X,d[v]}$ of general holomorphic maps to $X$. \\

In this paper we define a local version of symplectic field theory, which provides a TQFT approach to local Gromov-Witten theory in the same way as 
standard symplectic field theory provides a TQFT approach to standard Gromov-Witten theory. \\

The corresponding invariants $\SFT(\gamma)$ for closed Reeb orbits $\gamma$ were introduced by the author in \cite{F2}, \cite{F3} by counting holomorphic curves in the moduli spaces $\CM_{\gamma,d}(\Gamma^+,\Gamma^-)\subset\CM_{V,0}(\Gamma^+,\Gamma^-)$ of branched covers $\varphi: (S,j)\to(\IR\times\gamma,J)$. In the present paper we show that immersed holomorphic curves with elliptic orbits $v$ can be used to define morphisms $\SFT(v)$ between the invariants $\SFT(\Gamma^+)$ and $\SFT(\Gamma^-)$ assigned to its asymptotic orbits. For this we again show that the submoduli spaces $$\CM_{v,d}(\Gamma^+,\Gamma^-)\subset\CM_{X,d[v]}(\Gamma^+,\Gamma^-)$$ of multiple covers $u=v\circ\varphi: (S,j)\to(S',j')\to(X,J)$ are isolated in the space of all holomorphic maps in the sense that every infinitesimal deformation of a multiple cover in $\CM_{v,d}(\Gamma^+,\Gamma^-)$ as a holomorphic curve is still a multiple cover.   

\subsection{Obstruction bundles from positivity of intersections}
We denote by $\IM^0_v=\IM^0_{v,d}(\Gamma^+,\Gamma^-)$ the moduli space of parametrized branched coverings $u=v\circ\varphi: (S,j)\to (S',j')\to(X,J)$ of a fixed holomorphic map $v:(S',j')\to(X,J)$,. Note that for here we do not yet divide out any symmetry of the domain. As in \cite{F2}, for establishing the desired isolatedness result we need to prove that $$T_u\IM^0_v=\ker D_u,$$ see also (\cite{MDSa}, section 7.2). Here $D_u: T_u\BB\to\EE_u$ denotes the linearization of the Cauchy-Riemann operator $\CR_J$, viewed as a smooth section in an appropriate Banach space bundle $ \EE\to\BB$ with fibre $\EE_u=L^{p,d}(\Lambda^{0,1}\otimes_{j,J}u^*TX)$ over the Banach manifold of maps $\BB$ with tangent space $T_u\BB=H^{1,p,d}(u^*TX)\oplus T_j\IM_{0,n}$, see \cite{BM}, where the second summand keeps track of the variation of the complex structure on $S$. \\

Note that we always have the inclusion $T_u\IM^0_v\subset\ker D_u$. Assume that the underlying holomorphic curve $v:(S',j')\to (X,J)$ is simple and $J$ is a generic almost complex structure on $X$. By the well-known transversality result for simple holomorphic curves, it follows that the local dimension of $\IM^0_v$ near $u=v\circ\varphi$ is given by $$\dim T_u\IM^0_v = \ind(v)+2 \#\Crit(\varphi),$$ where $\#\Crit(\varphi)$ denotes the number of branch points of the branched covering map $\varphi:(S,j)\to(S',j')$. Note that the latter number is fully determined by $\Gamma^+$ and $\Gamma^-$. \\

Since $\ind u\leq \dim\ker D_u$, it follows that the desired equality $T_u\IM^0_v=\ker D_u$ can only hold when $\ind u\leq \ind v + 2 \#\Crit(\varphi)$. While the index $\ind u=\dim \ker D_u-\dim \coker D_u$ can be computed from topological data, in particular,  is constant over each connected component of the moduli space of holomorphic curves, the dimensions of kernel and cokernel themselves usually jump and are very hard to be controlled in general. \\

It was shown in \cite{Wen} that in $\dim X=4$  the dimensions of $\ker D_u$ and $\coker D_u$ can be controlled by topological data making use of positivity of intersections. This was used to prove an automatic transversality result for so-called nicely-embedded holomorphic curves by showing that the topological bounds imply $\coker D_u=0$ for any choice of almost complex structure $J$. We will now show that the same inequalities can be used to prove the following improvement about multiple covers. \\

Recall that in a three-dimensional contact manifold all closed Reeb orbits are either elliptic or hyperbolic. Here an orbit is called hyperbolic if all eigenvalues of the linearized return map are real and elliptic if they lie on the unit circle in the complex plane. Furthermore a (simple) holomorphic curve $v$ is called rigid if the Fredholm index of $v$ is zero. 

\begin{theorem} 
Assume that the rigid holomorphic curve $v:S\to X$ is immersed and that all its asymptotic orbits are elliptic. Then if $\ind u\leq \dim T_u\IM^0_v = 2 \#\Crit(\varphi)$ we have $T_u\IM^0_v=\ker D_u$, i.e., every infinitesimal deformation of $u$ as a holomorphic curve is again a multiple cover of $v$.   
\end{theorem}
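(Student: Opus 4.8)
The plan is to exploit the Wendl-type inequalities from \cite{Wen} that bound $\dim\ker D_u$ (equivalently $\dim\coker D_u$) in terms of topological data — the Conley–Zehnder indices of the asymptotic orbits, the normal Chern number, and the count of punctures — together with positivity of intersections in $\dim X=4$. Concretely, I would first set up the normal Cauchy–Riemann operator: since $v$ is immersed, for a multiple cover $u=v\circ\varphi$ the pulled-back tangent bundle $u^*TX$ splits (up to the usual twist by the branch divisor of $\varphi$) into a tangent-along-$v$ direction and a normal line bundle $N_u=\varphi^*N_v$. The operator $D_u$ correspondingly becomes upper-triangular with diagonal blocks the tangential operator $D_u^T$ and the normal operator $D_u^N$. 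The tangential block contributes exactly the deformations that keep $u$ a multiple cover of $v$, accounting for $\ind v$ (which is $0$ by rigidity) plus the $2\#\Crit(\varphi)$ coming from moving branch points and varying $j$; so $\dim\ker D_u^T = 2\#\Crit(\varphi) = \dim T_u\IM^0_v$ exactly when $\coker D_u^T = 0$, which holds because $v$ is simple and rigid (here I would cite the automatic transversality / regularity of the simple curve $v$ assumed in the hypotheses, passing to the cover via the chain rule as in \cite{F2}). It then suffices to show $\ker D_u^N = 0$.

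Next I would bound $\dim\ker D_u^N$. Wendl's inequality gives $\dim\ker D_u^N \leq \ind D_u^N + \#\Gamma_0$ (roughly, the number of punctures at which the asymptotic winding forces an even adjustment), but the key point exploited here is that \emph{all asymptotic orbits are elliptic}: for an elliptic orbit the Conley–Zehnder index and the extremal winding numbers behave in the ``rotation-number'' way that makes the parity/winding correction terms in the inequality as favorable as possible. Combining with the hypothesis $\ind u \leq 2\#\Crit(\varphi)$, which after subtracting off the tangential index $\ind v + 2\#\Crit(\varphi)$ translates into $\ind D_u^N \leq -\ind v = 0$ (using $\ind u = \ind D_u^T + \ind D_u^N$ and $\ind v = 0$), I would deduce that the Wendl bound forces $\dim\ker D_u^N \leq 0$, hence $\ker D_u^N = 0$. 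Since $D_u$ is upper-triangular with $\ker D_u^T = T_u\IM^0_v$ and $\ker D_u^N = 0$, a short diagram chase gives $\ker D_u = T_u\IM^0_v$, which is the assertion.

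The main obstacle I expect is making the normal-operator bookkeeping genuinely rigorous in the \emph{punctured} setting: the splitting $u^*TX = T \oplus N$ is only a splitting after accounting for the branch divisor of $\varphi$, so the normal operator acts on a line bundle whose degree (hence $\ind D_u^N$) depends on $\#\Crit(\varphi)$ and on the chosen asymptotic trivializations; one must track the relative first Chern numbers and the winding numbers of the asymptotic eigenfunctions at each puncture carefully, and verify that the elliptic hypothesis really does pin down the correction terms in Wendl's inequality (in particular that no hyperbolic-type ambiguity in the winding can spoil the estimate). A secondary technical point is ensuring the inequality $\ind u \leq 2\#\Crit(\varphi)$ indeed isolates the case $\ind D_u^N \leq 0$ on the nose — this needs the Riemann–Hurwitz count for $\#\Crit(\varphi)$ in terms of $\Gamma^\pm$ and the additivity of the Fredholm index under the triangular decomposition, both of which are standard but must be assembled consistently with the conventions of \cite{BM} and \cite{Wen}. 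Once the normal operator is correctly identified, the vanishing $\ker D_u^N=0$ is a direct application of the cited dimension bound, so the heart of the argument is really this reduction rather than any new analytic input.
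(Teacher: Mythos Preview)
Your proposal is correct and follows essentially the same approach as the paper: reduce to the normal Cauchy--Riemann operator $D_u^N$ via Wendl's tangent/normal splitting, use that $v$ immersed gives $\#\Crit(u)=\#\Crit(\varphi)$, use that elliptic asymptotics force $\#\Gamma_0=0$, and then invoke Wendl's positivity-of-intersection bound to conclude $\ker D_u^N=0$. The paper streamlines your upper-triangular diagram chase by quoting (\cite{Wen}, theorem~3) directly for the identity $\dim\ker D_u=2\#\Crit(u)+\dim\ker D_u^N$, and phrases the vanishing criterion as the strict inequality $c_1(N_u)<0$ (obtained from $2c_N(u)=\ind u-2$ together with (\cite{Wen}, prop.~3.18), so that $c_1(N_u)\leq -1$ under your hypothesis) rather than as a bound on $\ind D_u^N$.
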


Before we give the proof using the results from \cite{Wen}, we remark that this result indeed contains the automatic transversality result from \cite{Wen} as a special case (for index zero curves and up to excluding odd hyperbolic orbits because of troubles with bad orbits). Indeed it follows directly from the definition of nicely-embedded curves in (\cite{Wen}, definition 4.12) that each such holomorphic curve is immersed and has only odd orbits, see (\cite{Wen}, section 4.3). When $u=v$, i.e., $\varphi$ is the trivial covering, then $\ker D_u=0(=\ind(u))$ implies that $u$ is regular. 
  
\begin{proof} As mentioned above, we show the desired result using the following results in the paper \cite{Wen}. First, in (\cite{Wen}, section 3.3) it is shown that for every holomorphic curve $u:(S,j)\to (X,J)$ there exists a splitting of the pull-back bundle $$u^*TX=T_u\oplus N_u,$$ where $T_u$ and $N_u$ denote the tangent and the normal bundle to $u$, respectively. While $T_u$ agrees with $du(TS)$ away from the critical points of $u$, the normal bundle $N_u$ agrees with the contact hyperplane distributions $\xi^{\pm}=TV^{\pm}\cap J^{\pm}TV^{\pm}$ in the cylindrical ends $(\IR^{\pm}\times V^{\pm},J^{\pm})$ of $(X,J)$. \\ 

Using this splitting, in (\cite{Wen}, section 3.4) the author introduces the normal Cauchy-Riemann operator $$D_u^N: H^{1,p,d}(N_u)\oplus T_j\IM_{0,n}\to L^{p,d}(\Lambda^{0,1}\otimes_{j,J}N_u).$$ Furthermore he shows in (\cite{Wen}, theorem 3) that the dimensions of kernel and cokernel of $D_u$ and $D_u^N$ are related by 
$$\dim\ker D_u=2\#\Crit(u)+\dim\ker D_u^N\;\textrm{and}\;\coker D_u = \coker D_u^N,$$ where $\Crit(u)$ denotes the set of critical points of $u$. \\
 
Since the underlying simple curve $v$ is immersed, we have $$\#\Crit(u)=\#\Crit(\varphi)=\dim T_u\IM^0_v.$$ Hence it remains to prove that $\ker D_u^N=0$, which can be shown using the estimates in \cite{Wen}. \\

First, following (\cite{Wen}, section 1.1) the normal first Chern number of $u$ is defined by $$2c_N(u)=\ind u - 2 + 2g + \#\Gamma_0 = \ind u - 2,$$ where $\#\Gamma_0=0$ denotes the number of even asymptotic orbits of $u$ and $g$ denotes the genus of $u$. Note that the number of even asymptotic orbits of $u$ is indeed zero as every multiply-covered elliptic orbit is still elliptic and hence odd. \\

Following (\cite{Wen}, proposition 3.18) the normal first Chern number is related to an adjusted first Chern number $c_1(N_u)$ of the normal bundle by $$c_1(N_u)=c_N(u)-2\#\Crit(u),$$ where again $\Crit(u)=\Crit(\varphi)$ agrees with the set of branch points. Here ''adjusted'' refers to the fact that the count of zeros involves asymptotic intersections. \\ 

Since $\ind(u)\leq \dim T_u\IM^0_v = 2\#\Crit(\varphi)$, note that as in the proof of (\cite{Wen}, theorem 1) we still have $c_1(N_u)<0$, independent of the number of branch points. While the definition of $c_1(N_u)$ is quite complicated, all we need for our proof is that it is shown in (\cite{Wen}, proposition 2.2 (1)) that the latter implies $\ker D_u^N=0$. \end{proof}  

In particular, the latter holds true for all multiple covers of immersed holomorphic curves $v$ with elliptic orbits which are virtually rigid, i.e., with $\ind u=0$. As in (\cite{MDSa}, section 7.2) we can use our bound on the dimension of $\ker D_u$ to prove the existence of an obstruction bundle of constant rank. 

\begin{corollary} If $\ind u=0$ for $u\in \IM_{v,d}(\Gamma^+,\Gamma^-)$, then the cokernels of the linearized Cauchy-Riemann operator $\CR_J$ fit together to a smooth \emph{obstruction bundle} $$\Coker_v\CR_J=\Coker_{v,d}\CR_J(\Gamma^+,\Gamma^-)$$ over the moduli space of multiple covers $\IM_v=\IM_{v,d}(\Gamma^+,\Gamma^-)$. \end{corollary}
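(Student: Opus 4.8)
The plan is to apply the standard obstruction-bundle construction (as in \cite{MDSa}, section 7.2, or \cite{LP}, or \cite{F2}) now that Theorem~1.2 provides the crucial uniform control. First I would observe that by the previous theorem, since $\ind u = 0 \le \dim T_u\IM^0_v = 2\#\Crit(\varphi)$, the hypothesis of Theorem~1.2 is satisfied for every $u \in \IM_{v,d}(\Gamma^+,\Gamma^-)$, so $T_u\IM^0_v = \ker D_u$ and hence $\dim\ker D_u = 2\#\Crit(\varphi)$ is constant over the moduli space (note $\#\Crit(\varphi)$ is determined purely by $\Gamma^+$ and $\Gamma^-$, as remarked in subsection 1.3). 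Since $\ind u = \dim\ker D_u - \dim\coker D_u = 0$, it follows that $\dim\coker D_u = 2\#\Crit(\varphi)$ is also constant over $\IM_v$.

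Next I would promote this pointwise rank statement to a vector bundle. The cokernels $\coker D_u$ for $u \in \IM_v$ do not a priori form a subbundle of $\EE|_{\IM_v}$ because $\im D_u$ need not vary smoothly; the standard fix is to choose, locally near each $u_0 \in \IM_v$, a finite-dimensional subspace $F \subset \EE_{u_0}$ (independent of $u$) that is transverse to $\im D_u$ for all nearby $u$, so that $\EE_u = \im D_u \oplus F_u$ after parallel transport of $F$, and then $\coker D_u \cong F_u$. Because $\dim\coker D_u$ is locally constant and equal to $\dim(\EE_u/\im D_u)$, an orthogonal-complement (or $L^2$-projection) argument shows the assignment $u \mapsto \coker D_u$ varies smoothly, yielding a smooth rank-$2\#\Crit(\varphi)$ vector bundle $\Coker_v\CR_J \to \IM_v$. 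This is the content of the cited references and goes through verbatim once the constancy of the cokernel dimension is known.

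Finally I would record that $D_u$ depends smoothly on $u$ as a section of the bundle $\operatorname{Hom}(T_u\BB, \EE_u)$, which is what makes the local trivializations in the previous step smooth rather than merely continuous; this uses the smooth structure on $\BB$ and $\EE$ recalled in subsection 1.3 and the elliptic regularity underlying the Fredholm theory for asymptotically cylindrical Cauchy-Riemann operators from \cite{Wen} and \cite{BM}.

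The main obstacle I anticipate is precisely the jump in $\dim\ker D_u$ and $\dim\coker D_u$ that normally obstructs the bundle structure, and the whole point is that Theorem~1.2 removes it: without the automatic-transversality-type estimates from \cite{Wen} one would have no control on these dimensions, and the cokernel spaces would not assemble into a bundle. Everything downstream of that input is the routine local-model argument, so the real work has already been done in the proof of Theorem~1.2; here one only needs to check that $\ind u = 0$ together with that theorem pins down $\dim\coker D_u$, and then invoke the standard construction.
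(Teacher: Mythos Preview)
Your proposal is correct and follows essentially the same approach as the paper: deduce from the preceding theorem that $\dim\ker D_u = \ind v + 2\#\Crit(\varphi)$ is constant, hence $\dim\coker D_u = \dim\ker D_u - \ind u$ is constant, and then invoke the standard obstruction-bundle construction from \cite{MDSa}. In fact the paper's proof records only the dimension computation and leaves the bundle construction entirely implicit, so your write-up is more detailed than the original.
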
 

\begin{proof} From the desired equality $T_u\IM^0_v=\ker D_u$ it follows that the cokernel $\coker D_u$ has dimension 
$$\dim\coker D_u = \dim\ker D_u - \ind u = \ind v + 2 \#\Crit(\varphi) - \ind u,$$ in particular, is constant over each connected component of the moduli space of multiple covers. \end{proof} 

In \cite{F2} the author has shown that such an obstruction bundle exists over the moduli space of multiple covers over each orbit cylinder $\IR\times\gamma$ in $\IR\times V$. The neccessary equality $T_u\IM^0_{\gamma}=\ker D_u$ for all moduli spaces $\IM^0_{\gamma }=\IM^0_{\gamma,d}(\Gamma^+,\Gamma^-)$ of orbit curves was proven in \cite{F2} using energy considerations, since it is the infinitesimal version of the statement in \cite{BEHWZ} that every holomorphic curve with zero contact area (in the sense of \cite{BEHWZ}) is a multiple cover of an orbit cylinder. \\

Further it follows from the compactness results from \cite{BEHWZ} stated above (see also \cite{F2}) that the codimension-one boundary of the compactified moduli space $\CM_{\gamma,d}(\Gamma^+,\Gamma^-)$ of multiple covers of the orbit cylinder $\IR\times\gamma$ is given by (fibre) products $\CM_{\gamma,d}(\Gamma_1^+,\Gamma_1^-)\times\CM_{\gamma,d}(\Gamma_2^+,\Gamma_2^-)$ of lower-dimensional moduli spaces of multiple covers over the same orbit cylinder. Note that here we work with the unperturbed moduli space, which agrees with the moduli space for the contact manifold $S^1$, so everything reduces to studying branched coverings of cylinders and all curves are automatically regular (as a map to the cylinder over $S^1$). \\

The same compactness result proves that the codimension-one boundary of the compactified moduli space $\CM_{v,d}(\Gamma^+,\Gamma^-)$ of multiple covers over a rigid immersed curve with elliptic orbits $v$ is given by (fibre) products $\CM_{\gamma^+,d^+}(\Gamma_1^+,\Gamma_1^-)\times\CM_{v,d}(\Gamma_2^+,\Gamma_2^-)$ and $\CM_{v,d}(\Gamma_1^+,\Gamma_1^-)\times\CM_{\gamma^-,d^-}(\Gamma_2^+,\Gamma_2^-)$ of lower-dimensional moduli spaces, where on the zero level we still consists of multiple covers over the original rigid immersed curve with elliptic orbits, while on the positive and negative levels we find multiple covers over cylinders over asymptotic Reeb orbits $\gamma^+$ and $\gamma^-$ for $v$, respectively. \\

In order to compute the new SFT invariants for closed Reeb orbits, it was further shown in \cite{F2} that the obstruction bundle actually extends to the compactified moduli space, which again follows from energy reasons and a linear gluing theorem. We now prove that also the above obstruction bundle $\Coker_v\CR_J=\Coker_{v,d}(\Gamma^+,\Gamma^-)$ over the moduli space of multiple covers of a rigid immersed curves with only elliptic asymptotic orbits extends to the compactification.

\begin{theorem} 
The obstruction bundle $\Coker_v\CR_J=\Coker_{v,d}(\Gamma^+,\Gamma^-)$ over the moduli space $\IM_v=\IM_{v,d}(\Gamma^+,\Gamma^-)$ of index zero multiple covers of a rigid immersed curve with elliptic orbits extends to a smooth bundle $\overline{\Coker}_v\CR_J=\overline{\Coker}_{v,d}(\Gamma^+,\Gamma^-)$ over its compactification $\CM_v=\CM_{v,d}(\Gamma^+,\Gamma^-)$.
\end{theorem}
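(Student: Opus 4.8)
The plan is to adapt the argument of \cite{F2}, where the analogous extension statement was proved for multiple covers of orbit cylinders, replacing the energy estimate used there on the base level by the automatic-transversality estimates of \cite{Wen}, i.e. by the isolatedness theorem proved above. First I would record that, by the corollary above, over the open moduli space $\IM_v$ the obstruction bundle $\Coker_v\CR_J$ is already a vector bundle, of rank
\[ r \;=\; \dim\ker D_u - \ind u \;=\; 2\#\Crit(\varphi) - \ind u . \]
So the task is to extend it continuously, and then smoothly, over the strata added in the compactification. By the compactness theorem of \cite{BEHWZ} recalled above, the codimension-one boundary of $\CM_v$ consists of the fibre products $\CM_{\gamma^+,d^+}(\Gamma_1^+,\Gamma_1^-)\times\CM_{v,d}(\Gamma_2^+,\Gamma_2^-)$ and $\CM_{v,d}(\Gamma_1^+,\Gamma_1^-)\times\CM_{\gamma^-,d^-}(\Gamma_2^+,\Gamma_2^-)$, and the deeper strata are iterated products of the same shape: one main level carrying the homology class $d[v]$ (a branched cover of $v$) together with several symplectization levels carrying branched covers of orbit cylinders over the asymptotic orbits $\gamma^\pm$. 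I would treat a codimension-one stratum in detail; the general case is identical, gluing along several orbits simultaneously, combined with an induction on the codimension.

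\emph{Step 1: the limit fibre.} To a broken configuration $\mathbf u=(u_0;\psi_1^\pm,\psi_2^\pm,\dots)$ --- a degree-$d$ cover $u_0=v\circ\varphi_0$ of $v$ on the main level and branched covers $\psi_j^\pm$ of the cylinders $\IR\times\gamma^\pm$ on the symplectization levels --- I would associate the direct-sum operator $D_{\mathbf u}=D_{u_0}\oplus\bigoplus_j D_{\psi_j^\pm}$ on the corresponding product of the weighted Sobolev spaces, and show that $\dim\coker D_{\mathbf u}=r$. On each orbit-cylinder level the equality $T_{\psi_j}\IM^0_\gamma=\ker D_{\psi_j}$, hence the value of $\dim\coker D_{\psi_j}$, is exactly what is provided by \cite{F2}. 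On the main level $u_0$ is again a multiple cover of the immersed curve $v$ whose asymptotic orbits are all elliptic --- hence odd --- and whose index still satisfies $\ind u_0\le 2\#\Crit(\varphi_0)=\dim T_{u_0}\IM^0_v$, because breaking off orbit-cylinder covers only transfers index to the symplectization levels without increasing it; so the isolatedness theorem proved above applies verbatim on the main level and gives $T_{u_0}\IM^0_v=\ker D_{u_0}$. Additivity of the Fredholm index and of the branch-point count (Riemann--Hurwitz, the ramification data being redistributed among the levels) along the breaking then yields $\dim\coker D_{\mathbf u}=\sum_\ell(\dim\ker D_\ell-\ind_\ell)=2\#\Crit(\varphi)-\ind u=r$, matching the rank of $\Coker_v\CR_J$ on the adjacent open stratum.

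\emph{Step 2: gluing and smoothness.} Next I would invoke a linear gluing theorem --- the one already used in \cite{F2}, whose analytic inputs are only the exponential decay of the $u_j$ towards their asymptotic Reeb orbits (Morse nondegeneracy) and the uniform invertibility of the asymptotic operators off their kernels. For the preglued curves $u_R$ with gluing length $R\to\infty$ it produces a uniformly bounded approximate right inverse $Q_R$ of $D_{u_R}$, a smooth family of projections onto $\coker D_{u_R}$ built from $Q_R$, and a canonical isomorphism $\coker D_{u_R}\xrightarrow{\ \cong\ }\coker D_{\mathbf u}$ depending smoothly on $R$ and on the moduli parameters of the levels and converging, as $R\to\infty$, to the identification of Step 1. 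Since the honest holomorphic curves near the boundary stratum are precisely the corrected preglued curves and, by the isolatedness theorem, lie in $\IM_v$ where the cokernel has constant rank $r$, Step 1 shows that the fibres extend continuously over the stratum with matching rank; expressing the cokernels as the images of the smooth family of projections, and using the gluing charts of \cite{BEHWZ},\cite{HWZ} that define the smooth structure of $\CM_v$, one checks as in (\cite{MDSa}, section 7.2) and \cite{F2} that the transition functions are smooth. This yields the smooth extension $\overline{\Coker}_v\CR_J=\overline{\Coker}_{v,d}(\Gamma^+,\Gamma^-)$ over the codimension-one boundary, and iterating over all strata completes the argument.

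The main obstacle is Step 2. One has to fix weighted Sobolev completions --- with exponential weights adapted to the spectral gaps of the asymptotic operators --- making each $D_{u_j}$ Fredholm with exactly the kernel and cokernel dimensions used in Step 1, and then prove the linear gluing estimate with constants uniform in $R$ and in the moduli parameters, including near nodes where a branch point of $\varphi$ limits onto the breaking orbit (the usual care required for \cite{BEHWZ}-type compactness). The genuinely delicate point is that the identification $\coker D_{u_R}\cong\coker D_{\mathbf u}$ is an actual isomorphism, with no jump of the cokernel dimension: this rests on the bookkeeping of Step 1 being exact --- $\ind$, $\dim\ker$ and $\dim\coker$ simultaneously additive along every breaking --- which in turn depends on the \cite{Wen} estimate continuing to force $\ker D^N_{u_0}=0$ on the main level after part of the index has been carried off to the symplectization levels, precisely the isolatedness theorem proved above applied on each stratum.
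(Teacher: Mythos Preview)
Your overall architecture matches the paper's: check the kernel equality level by level on a broken configuration, then invoke the linear gluing of \cite{F2}. The orbit-cylinder levels and the gluing step are handled correctly.

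The gap is in Step~1, on the main level. You assert $\ind u_0\le 2\#\Crit(\varphi_0)$ with the justification that ``breaking off orbit-cylinder covers only transfers index to the symplectization levels without increasing it.'' This is not a proof. By index additivity $\ind u^+ + \ind u_0 = \ind u = 0$, so what you actually need is $\ind u^+\ge 0$ for every branched cover $u^+$ of an orbit cylinder appearing on a symplectization level. That inequality is \emph{not} automatic: the Fredholm index of such a cover is governed by the Conley--Zehnder indices $\CZ(\gamma^k)$ of the iterated asymptotic orbits, and in general dimensions (or for general orbit types) these can combine to give negative index. It is precisely here that the paper inputs a nontrivial fact specific to $\dim X=4$: following \cite{F2}, one uses Long's results on the iteration behaviour of the Conley--Zehnder index to conclude $\ind u^+\ge 0$, hence $\ind u_0\le 0\le 2\#\Crit(\varphi_0)$, so that the isolatedness theorem applies to $u_0$. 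Your Riemann--Hurwitz bookkeeping of branch points does not substitute for this, since the inequality you need compares $\ind u_0$ with $2\#\Crit(\varphi_0)$, not $\ind u$ with $2\#\Crit(\varphi)$; redistributing branch points among levels tells you nothing about how the index splits unless you already control $\ind u^+$.

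You correctly flag at the end that the whole argument rests on the \cite{Wen} estimate continuing to force $\ker D^N_{u_0}=0$ on the main level after index has been carried off --- but that is exactly the point where you have not supplied the missing ingredient. Insert the Long/\cite{F2} index inequality for the symplectization levels and your proof becomes the paper's.
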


\begin{proof} Assume that the sequence of multiple covers $u_n=v\circ\varphi_n\in\IM_{v,d}(\Gamma^+,\Gamma^-)$ converges to a two-level curve $(u^+,u^0)\in\IM_{\gamma^+,d^+}(\Gamma_1^+,\Gamma_1^-)\times\IM_{v,d}(\Gamma_2^+,\Gamma_2^-)$ in the sense of \cite{BEHWZ}. Following \cite{F2} it can be shown using work of Long about the Conley-Zehnder index of multiply-covered Reeb orbits that $\ind u^+\geq 0$ when $\dim X=4$. Together with $\ind u^++\ind u^0=\ind u=0$ we get $\ind u^0\leq 0$, so that $u^0$ still meets the requirements from the above theorem and we have $T_{u^0}\IM^0_v=\ker D_{u^0}$. On the other hand, from \cite{F2} we get by energy considerations that $T_{u^+}\IM^0_{\gamma^+}=\ker D_{u^+}$. Putting together, we find that also for the broken curve $(u^+,u^0)\in\IM_{\gamma^+}\times\IM_v$ in the compactification we have $$T_{(u^+,u^0)}(\IM^0_{\gamma^+}\times\IM^0_v)=T_{u^+}\IM^0_{\gamma^+}\oplus T_{u^0}\IM^0_v=\ker D_{u^+}\oplus \ker D_{u^0} = \ker D_{(u^+,u^0)}$$ as desired, so that we can prove the existence of the smooth bundle $\overline{\Coker}_v\CR_J$ using linear gluing as in \cite{F2}. \end{proof}
  
\subsection{Transversality for multiple covers using obstruction bundles}
After showing that for the moduli spaces of multiple covers of rigid immersed curves with elliptic orbits we have the same nice obstruction bundle setup as for the moduli spaces of multiple covers over orbit cylinders, we now want to discuss the appearing algebraic structures. In other words, we want to discuss in how far one can actually count multiple covers of these nice curves, i.e., in how far this count depends on chosen auxiliary data like abstract perturbations needed in order to achieve regularity. \\

Before all that, we however need to state the main theorem about obstruction bundle transversality, see (\cite{F2}, proposition 3.1). For alternative proofs we refer to (\cite{MDSa}, proposition 7.2.3) and the proof of the main theorem in \cite{LP}. 

\begin{proposition} Let $\nu$ be a section in the cokernel bundle $\Coker\CR_J\subset\EE|_{\IM}$ over the moduli space $\IM=\CR_J^{-1}(0)\subset\BB$, which is extended (using parallel transport and cut-off functions, as described in \cite{F2},\cite{MDSa},\cite{LP}) to a section in the full Banach space bundle $\EE\to\BB$. Then it holds:
\begin{itemize} 
\item The perturbed moduli space $\IM^{\nu}=(\CR_J+\nu)^{-1}(0)$ agrees with the zero set of $\nu$ in $\IM$,   
\begin{equation*} \IM^{\nu}=\nu^{-1}(0). \end{equation*}
\item If $\nu$ is a transversal section in $\Coker\CR_J$, then $\CR^{\nu}_J$ is a transversal section in $\EE$, i.e., $\IM^{\nu}$ is regular.
\item The linearization of $\nu$ at every zero is a compact operator, so that the linearizations of $\CR_J$ and $\CR_J^{\nu}$ belong to the same class of Fredholm operators.
\end{itemize}
\end{proposition}

Note that the above theorem does not only hold for the non-compact moduli space itself, but also for the other moduli spaces of orbit curves appearing in the compactification. Using the linear gluing result in \cite{F2} it follows that a compactification of the perturbed moduli space $\IM^{\nu}$ is given by $$\CM^{\bar{\nu}}=\bar{\nu}^{-1}(0)\subset\CM$$ for a smooth section $\bar{\nu}$ in the extended obstruction bundle $\overline{\Coker}\CR_J$ over the compactified nonregular moduli space $\CM$ of multiple covers of the orbit cylinder. Note that in order to formulate the corresponding version of the above proposition directly for the compactified moduli space, one has to work with polyfolds instead of Banach manifolds. \\

In Gromov-Witten theory we would hence obtain the contribution of the regular perturbed moduli space by integrating the Euler class of the finite-dimensional obstruction bundle over the compactified moduli space. On the other hand, passing from Gromov-Witten theory back to symplectic field theory again, we see that the presence of codimension-one boundary of the nonregular moduli spaces of branched covers implies that Euler numbers for sections in the cokernel bundles are not defined in general, since the count of zeroes depends on the compact perturbations chosen for the moduli spaces in the boundary. Instead of looking at a single moduli space, we have to consider all moduli spaces at once and define {\it coherent} collections of sections in the obstruction bundles $\overline{\Coker}\CR_J$ over all moduli spaces $\CM$. \\

We start with the case of multiple covers of orbit cylinders. Recall from \cite{F2} that the codimension-one boundary of every moduli space $\CM=\CM_{\gamma,d}(\Gamma^+,\Gamma^-)$ again consists of curves with two levels, whose moduli spaces can be represented as products $\CM_1\times\CM_2=\CM_{\gamma,d_1}(\Gamma_1^+,\Gamma_1^-)\times\CM_{\gamma,d_2}(\Gamma_2^+,\Gamma_2^-)$ of moduli spaces of strictly lower dimension, where the first index refers to the level. On the other hand, it follows from the linear gluing result in \cite{F2} that over the boundary component $\CM_1\times\CM_2$ the cokernel bundle $\overline{\Coker}\CR_{J}=\overline{\Coker}_{\gamma,d}\CR_{J}(\Gamma^+,\Gamma^-)$ is given by 
\begin{equation*} \overline{\Coker}\CR_{J}|_{\CM_1\times\CM_2} = \pi_1^*\overline{\Coker}^1\CR_{J}\oplus\pi_2^*\overline{\Coker}^2\CR_{J}, \end{equation*}
where $\overline{\Coker}^{1,2}\CR_{J}=\overline{\Coker}_{\gamma,d_{1,2}}\CR_{J}(\Gamma_{1,2}^+,\Gamma_{1,2}^-)$, denote the cokernel bundles over the compact moduli spaces $\CM_{1,2}=\CM_{\gamma,d_{1,2}}(\Gamma_{1,2}^+,\Gamma_{1,2}^-)$ and $\pi_{1,2}:\CM_1,\CM_2\to\CM_{1,2}$ is the projection onto the first or second factor, respectively. \\  

Assume that we have chosen sections $\bar{\nu}=\bar{\nu}_{\gamma,d}(\Gamma^+,\Gamma^-)$ in the cokernel bundles $\overline{\Coker}\CR_{J}$ over all moduli spaces $\CM$ of branched covers. Following (\cite{F2}, definition 3.2) we call this collection of sections $(\bar{\nu})$ {\it coherent} if over every codimension-one boundary component $\CM_1\times\CM_2$ of a moduli space $\CM=\CM_{\gamma,d}(\Gamma^+,\Gamma^-)$ the corresponding section $\bar{\nu}$ agrees with the pull-back $\pi_1^*\bar{\nu}_1\oplus\pi_2^*\bar{\nu}_2$ of the chosen sections $\bar{\nu}_{1,2}$ in the cokernel bundles $\overline{\Coker}^{1,2}\CR_{J}$ over $\CM_{1,2}$, respectively. \\

Since in the end we will again be interested in the zero sets of these sections, we will again assume that all occuring sections are transversal to the zero section. On the other hand, it is not hard to see that one can always find such coherent collections of (transversal) sections in the cokernel bundles by using induction on the dimension of the underlying nonregular moduli space of branched covers.\\

Now we want to turn to the moduli spaces of multiple covers of immersed curves with elliptic orbits in four-dimensional symplectic cobordisms. Recall that the codimension-one boundary of every moduli space of branched covers $\CM=\CM_{v,d}(\Gamma^+,\Gamma^-)$ again consists of curves with two levels, whose moduli spaces can be represented as products $\CM_+\times\CM_0=\CM_{\gamma^+,d^+}(\Gamma_1^+,\Gamma_1^-)\times\CM_{v,d}(\Gamma_2^+,\Gamma_2^-)$ and $\CM_0\times\CM_-=\CM_{v,d}(\Gamma_1^+,\Gamma_1^-)\times\CM_{\gamma^-,d^-}(\Gamma_2^+,\Gamma_2^-)$ of moduli spaces of strictly lower dimension. On the other hand, it follows that over the boundary component $\CM_+\times\CM_0$ or $\CM_0\times\CM_-$ the cokernel bundle $\overline{\Coker}\CR_{J}=\overline{\Coker}_{v,d}\CR_J(\Gamma^+,\Gamma^-)$ is given by 
\begin{eqnarray*} 
\overline{\Coker}\CR_{J}|_{\CM_+\times\CM_0} &=& \pi_+^*\overline{\Coker}^+\CR_{J}\oplus\pi_0^*\overline{\Coker}^0\CR_{J}, \\
\overline{\Coker}\CR_{J}|_{\CM_0\times\CM_-} &=& \pi_0^*\overline{\Coker}^0\CR_{J}\oplus\pi_-^*\overline{\Coker}^-\CR_{J}, 
\end{eqnarray*}
where  $\overline{\Coker}^0\CR_{J}=\overline{\Coker}_{v,d}\CR_J(\Gamma_{1,2}^+,\Gamma_{1,2}^-)$ and $\overline{\Coker}^+\CR_{J}=\overline{\Coker}_{\gamma^+,d^+}\CR_J(\Gamma_1^+,\Gamma_1^-)$, $\overline{\Coker}^-\CR_{J}=\overline{\Coker}_{\gamma^-,d^-}\CR_J(\Gamma_2^+,\Gamma_2^-)$ denote the cokernel bundle over the moduli space $\CM_0$ of multiple covers of the immersed curves with elliptic orbits and the moduli spaces $\CM_+$, $\CM_-$ of multiple covers of cylinders over positive or negative asymptotic Reeb orbits $\gamma^{\pm}$ of $v$, respectively. \\

With this we can now give the analogue of the above definition of special sections $\bar{\nu}=\bar{\nu}_{v,d}(\Gamma^+,\Gamma^-)$ in obstruction bundles $\overline{\Coker}\CR_{J}=\overline{\Coker}_{v,d}\CR_J(\Gamma^+,\Gamma^-)$ over moduli spaces $\CM=\CM_{v,d}(\Gamma^+,\Gamma^-)$ of multiple covers of immersed curves with elliptic orbits $v$. Assume that we have already coherently chosen sections $\bar{\nu}_{\pm}=\bar{\nu}_{\gamma^{\pm},d}(\Gamma^+,\Gamma^-)$ in the cokernel bundles $\overline{\Coker}^{\pm}\CR_{J}=\overline{\Coker}_{\gamma^{\pm},d}\CR_J(\Gamma^+,\Gamma^-)$ over all moduli spaces $\CM_{\pm}=\CM_{\gamma,d}(\Gamma^+,\Gamma^-)$ of branched covers of cylinders over positive and negative asymptotic Reeb orbits $\gamma^{\pm}$ of $v$.

\begin{definition} Assume that we have chosen sections $\bar{\nu}$ in the cokernel bundles $\overline{\Coker}\CR_{J}$ over all moduli spaces $\CM$ of multiple covers of the immersed curve with elliptic orbits $v$. Then we call such a collection of sections $(\bar{\nu})$ {\it coherently connecting} $(\bar{\nu}_+)$ and $(\bar{\nu}_-)$ if over every codimension-one boundary component $\CM_+\times\CM_0$, $\CM_0\times \CM_-$ the corresponding section $\bar{\nu}$ agrees with the pull-back $\pi_+^*\bar{\nu}_+\oplus\pi_0^*\bar{\nu}_0$,  $\pi_0^*\bar{\nu}_0\oplus\pi_-^*\bar{\nu}_-$ of the chosen sections $\bar{\nu}_0$ and $\bar{\nu}_+$, $\bar{\nu}_-$ in the cokernel bundles $\overline{\Coker}^0\CR_{J}$ over $\CM_0$, $\overline{\Coker}^{\pm}\CR_{J}$ over $\CM_{\pm}$, respectively.  \end{definition}

Since in the end we will again be interested in the zero sets of sections, we will again assume that all occuring sections are transversal to the zero section. On the other hand, it is again not hard to see that one can always find such coherent collections of (transversal) sections in the cokernel bundles by using induction on the dimension of the underlying nonregular moduli space of branched covers. 

\subsection{Counting multiple covers of immersed curves with elliptic orbits}
We now turn to the resulting algebraic structures, where we first recall the algebraic formalism to define invariants for closed Reeb orbits. Denote by $\PP_{\gamma}$ be the graded Poisson subalgebra of the Poisson algebra $\PP$ of rational SFT, which is generated only by those $p$- and $q$-variables $p_{\gamma^n}$, $q_{\gamma^n}$ corresponding to Reeb orbits which are multiple covers of the fixed orbit $\gamma$ and which are good in the sense of \cite{BM}.  It will become important that the natural identification of the formal variables $p_{\gamma^n}$ and $q_{\gamma^n}$ for different orbits $\gamma$ does {\it not} lead to an isomorphism of the graded algebras $\PP_{\gamma}$ with the corresponding graded algebra $\PP_{S^1}$ for $\gamma=V=S^1$, not only since the gradings of $p_{\gamma^n}$ and $q_{\gamma^n}$ are different and hence even the commutation rules may change but also that variables $p_{\gamma^n}$ and $q_{\gamma^n}$ may not be there since they would correspond to bad orbits. \\ 

In (\cite{EGH}, section 2.2.3) one collects the information about all moduli spaces of holomorphic curves in $\IR\times V$ in a generating function, the SFT Hamiltonian $\Ih$, which does not only depend on contact form and cylindrical almost complex structure but also on the collection of abstract perturbations. As in \cite{F2} we now define a local SFT Hamiltonian $\Ih_{\gamma}\in\PP_{\gamma}$  by only counting branched covers of the cylinder over the Reeb orbit $\gamma$. Instead of working with polyfold perturbations, we have seen above that we can make all moduli spaces of orbit curves regular by choosing sections in the cokernel bundles over all moduli spaces. For such a collection of sections $(\bar{\nu})$ we then define the Hamiltonian $\Ih_{\gamma}=\Ih_{\gamma}^{(\bar{\nu})}$ by 
$$\Ih_{\gamma}^{(\bar{\nu})}=\sum_{\Gamma^+,\Gamma^-} \frac{1}{s^+!s^-!\kappa_{\Gamma^+}\kappa_{\Gamma^-}}\#\CM^{\bar{\nu}}_{\gamma,d}(\Gamma^+,\Gamma^-) q^{\Gamma^-} p^{\Gamma^+},$$ with $p^{\Gamma^+}=p_{\gamma^{n_1^+}}\ldots p_{\gamma^{n_{s^+}^+}}$ and $q^{\Gamma^-}=q_{\gamma^{n_1^-}}\ldots q_{\gamma^{n_{s^-}^-}}$, where $$\CM^{\bar{\nu}}_{\gamma,d}(\Gamma^+,\Gamma^-)=\bar{\nu}^{-1}(0)\subset\CM_{\gamma,d}(\Gamma^+,\Gamma^-).$$ Furthermore $s^{\pm}=\#\Gamma^{\pm}$ and $\kappa_{\Gamma^{\pm}}=\kappa_{\gamma^{n_1^-}}\ldots \kappa_{\gamma^{n_{s^-}^-}}$, where $\kappa_{\gamma}$ denotes the multiplicity of the orbit $\gamma$. \\

Note that in general we have to expect that the local SFT Hamiltonian explicitly depends on the chosen coherent collection of sections. However, in \cite{F2} we were able to prove the following result.

\begin{theorem} For every closed Reeb orbit $\gamma$ the Hamiltonian $\Ih_{\gamma}= \Ih^{\bar{\nu}}_{\gamma}$ vanishes independently of the chosen coherent collection of sections $(\bar{\nu})$ in the cokernel bundles over all moduli spaces of branched covers, $\Ih_{\gamma} = \Ih^{\bar{\nu}}_{\gamma}= 0.$ \end{theorem}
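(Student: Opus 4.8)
The plan is to reduce the vanishing of $\Ih_\gamma$ to the known structure of its codimension-one boundary, exactly as in \cite{F2}, so that the statement follows from a cobordism/master-equation argument together with the fact that the zero-level moduli spaces of branched covers of a cylinder $\IR\times\gamma$ carry a free $\IR$-action. First I would recall that, since we work with the \emph{unperturbed} moduli spaces $\CM_{\gamma,d}(\Gamma^+,\Gamma^-)$ (which, as noted in the excerpt, agree with the SFT moduli spaces for the contact manifold $S^1$), every such moduli space of positive dimension carries the target $\IR$-translation action, which is free because a branched cover of a cylinder over an orbit is never $\IR$-invariant unless it is an orbit cylinder itself (and those contribute only in the trivial $d=1$ sector, where index reasons already force the contribution to vanish). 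Hence after choosing transversal coherent sections $(\bar\nu)$ the zero set $\CM^{\bar\nu}_{\gamma,d}(\Gamma^+,\Gamma^-)=\bar\nu^{-1}(0)$ is a compact manifold of dimension $\ind(\CR_J)-\dim\coker = 0$ only in the rigid case, and in that rigid case I would argue the signed count is the object we must show telescopes to zero.

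Next I would invoke the coherence relation established in subsection 1.4: over each codimension-one boundary component $\CM_1\times\CM_2=\CM_{\gamma,d_1}(\Gamma_1^+,\Gamma_1^-)\times\CM_{\gamma,d_2}(\Gamma_2^+,\Gamma_2^-)$ the obstruction bundle splits as $\pi_1^*\overline{\Coker}^1\CR_J\oplus\pi_2^*\overline{\Coker}^2\CR_J$, and the coherent section restricts to $\pi_1^*\bar\nu_1\oplus\pi_2^*\bar\nu_2$. Therefore the zero set of $\bar\nu$ over such a boundary component is the fibre product $\CM^{\bar\nu}_{\gamma,d_1}(\Gamma_1^+,\Gamma_1^-)\times\CM^{\bar\nu}_{\gamma,d_2}(\Gamma_2^+,\Gamma_2^-)$. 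This is precisely the statement that the generating function $\Ih_\gamma^{(\bar\nu)}$ satisfies the SFT master equation: the $1$-dimensional perturbed moduli spaces $\CM^{\bar\nu}_{\gamma,d}(\Gamma^+,\Gamma^-)$ with $\ind=1$ provide compact oriented cobordisms whose boundary is the union of these fibre products, which in the algebra $\PP_\gamma$ translates into the identity $\{\Ih_\gamma,\Ih_\gamma\}=0$ together with — after incorporating the $\hbar$-grading / genus filtration as in \cite{EGH} — the fact that all structure maps assemble compatibly. I would emphasize that because $\gamma$ is a single orbit, there is no separate "anchor" contribution and no closed-curve term, so the only input is this self-bracket relation plus the observation below.

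The actual vanishing (as opposed to merely $\{\Ih_\gamma,\Ih_\gamma\}=0$) is where the argument of \cite{F2} really does the work, and this is the step I expect to be the main obstacle to reproduce cleanly: one shows $\Ih_\gamma=0$ by an induction on the covering degree $d$ (equivalently, on the dimension of the underlying nonregular moduli space), using that in degree $d$ the only curves of index zero over $\IR\times\gamma$ with all punctures asymptotic to covers of $\gamma$ are, after the $\IR$-quotient, forced by the dimension formula and the elliptic (hence good, odd) nature of all multiple covers of $\gamma$ to lie in boundary strata already controlled by lower-degree data; the coherence relation then identifies the degree-$d$ count with a sum of products of lower-degree counts, all of which vanish by the inductive hypothesis, while the "new" top stratum contributes nothing because its virtual dimension after dividing by $\IR$ is negative. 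One must be careful that the orientations and the combinatorial weights $1/(s^+!\,s^-!\,\kappa_{\Gamma^+}\kappa_{\Gamma^-})$ match up so that the boundary sum is genuinely the coefficient-wise bracket; this bookkeeping, together with checking that no bad orbits sneak in (excluded here since all covers of an elliptic orbit are good), is the delicate part. Granting the inductive setup from \cite{F2}, the conclusion $\Ih_\gamma=\Ih^{\bar\nu}_\gamma=0$ follows, and its independence of $(\bar\nu)$ is automatic since every coherent choice yields the same vanishing generating function.
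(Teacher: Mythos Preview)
The paper does not prove this theorem; it is quoted from \cite{F2}, and the remark immediately following it warns that the vanishing is \emph{not} a consequence of a global $S^1$-symmetry on the perturbed moduli space. Your proposal has a genuine gap at the crucial step. The master equation $\{\Ih_\gamma,\Ih_\gamma\}=0$ together with the coherence of $(\bar\nu)$ only tells you that boundaries of one-dimensional perturbed moduli spaces are products of zero-dimensional ones; it places no constraint on an individual zero-dimensional count. Your inductive mechanism therefore needs the assertion that ``the top stratum contributes nothing because its virtual dimension after dividing by $\IR$ is negative,'' and this is simply false in general: for a positive hyperbolic orbit $\gamma$ the pair of pants with $\Gamma^+=(\gamma,\gamma)$, $\Gamma^-=(\gamma^2)$ has Fredholm index $1$ (hence index $0$ after the $\IR$-quotient) and sits in the interior of the moduli space, not in a boundary stratum. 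So for hyperbolic $\gamma$ there genuinely are rigid perturbed curves to count, and nothing in your outline explains why their signed count is zero.

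You have also tacitly assumed $\gamma$ is elliptic (``elliptic hence good, odd''), whereas the theorem is stated for every closed Reeb orbit. For elliptic $\gamma$ in a four-dimensional symplectization your intuition is in fact salvageable, but by a cleaner route than the one you sketch: since every $\CZ(\gamma^k)$ is odd, the index formula $\ind u = s-2+\sum\CZ^+-\sum\CZ^-$ forces $\ind u \equiv s+s \equiv 0\pmod 2$, so no branched cover can have $\ind u=1$ and $\Ih_\gamma$ is an empty sum. That parity observation is what your phrase ``forced by the dimension formula'' should really mean, and it has nothing to do with boundary strata or induction. The general statement (arbitrary orbit type, arbitrary dimension) proved in \cite{F2} requires a different argument, based on the action filtration and a direct analysis of the obstruction bundles, which your proposal does not reproduce.
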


Although the result of our computation may suggest that it follows a global symmetry of the resulting regular moduli space, we want to emphasize that the $S^1$-action on the underlying nonregular moduli space of branched covers in general does not lift to an action on the obstruction bundle over this space, so that the resulting perturbed moduli space does {\it not} carry a global symmetry. \\

In the same way as for a single orbit we define for collections of Reeb orbits $\Gamma$ the Poisson algebras $\PP_{\Gamma}$ to be the graded Poisson subalgebras of the Poisson algebra $\PP$, which is generated only by those $p$- and $q$-variables $p_{\gamma^n}$, $q_{\gamma^n}$ corresponding to Reeb orbits which are multiple covers of orbits $\gamma\in\Gamma$.
For a rigid immersed holomorphic curve $v$ with asymptotic orbits $\Gamma'^+$ and $\Gamma'^-$ let $\LL_{\Gamma'^+,\Gamma'^-}$ be the space of formal power series in the variables $p^+_{\gamma_+^n}$ with $\gamma^+\in\Gamma'^+$ with coefficients which are polynomials in the variables $q^-_{\gamma_-^n}$, $\gamma^-\in\Gamma'^-$. Furthermore we introduce as in \cite{EGH} the bigger space $\hat{\LL}_{\Gamma'^+,\Gamma'^-}$ whose elements are power series in $p^+_{\gamma_+^n}$ and $p^-_{\gamma_-^n}$ which are polynomials in $q^+_{\gamma_+^n}$ and $q^-_{\gamma_-^n}$. \\

While in standard SFT one collects the information about all moduli spaces of holomorphic curves in $X$ by defining a potential $\If$, we now define a local SFT potential $\If_v\in\LL_{\Gamma'^+,\Gamma'^-}$ counting only multiple covers of the fixed rigid immersed curve with elliptic orbits $v:S\to X$ as in (\cite{EGH}, section 2.3.2).

\begin{definition} For every choice of obstruction bundle sections $(\bar{\nu})$ coherently connecting the coherent collections of obstruction bundle sections $(\bar{\nu}^{\pm})$ chosen for all positive and negative asymptotic Reeb orbits $\gamma^{\pm}\in\Gamma'^{\pm}$ of $v$, we define the local SFT potential of a rigid immersed holomorphic curve $v$ with elliptic orbits by $$\If_v\stackrel{!}{=}\If_v^{(\bar{\nu})}=\sum_{\Gamma^+,\Gamma^-} \frac{1}{s^+!s^-!\kappa_{\Gamma^+}\kappa_{\Gamma^-}}\#\CM^{\bar{\nu}}_{v,d}(\Gamma^+,\Gamma^-) q_-^{\Gamma^-} p_+^{\Gamma^+},$$ where $\CM^{\bar{\nu}}_{v,d}(\Gamma^+,\Gamma^-)=\bar{\nu}^{-1}(0)\subset\CM_{v,d}(\Gamma^+,\Gamma^-)$. \end{definition} 

For the rest of this section we want to discuss in how far the local SFT Hamiltonians $\Ih_{\gamma}$ and the local SFT potentials $\If_v$ depend on the collections of obstruction bundle sections $(\bar{\nu})$ needed to define them. While the above result about the local SFT Hamiltonian makes identities like the master equation $\{\Ih_{\gamma},\Ih_{\gamma}\}=0$ trivial, we can use it to prove the following important result

\begin{theorem} Assume that the coherent collections of sections $(\bar{\nu}_{\pm})$ are fixed for all asymptotic Reeb orbits $\gamma^{\pm}\in\Gamma'^{\pm}$ of $v$. Then the local SFT potential $\If_v=\If_v^{(\bar{\nu})}$ of $v$ is independent of the chosen collection of sections $(\bar{\nu})$ coherently connecting $(\bar{\nu}^+)$ and $(\bar{\nu}^-)$. In particular, the algebraic count of multiple covers of the immersed curve with elliptic orbits $v$ is well-defined. \end{theorem}

\begin{proof} For two collections of sections $(\bar{\nu}_0)$ and $(\bar{\nu}_1)$ coherently connecting $(\bar{\nu}^+)$ and $(\bar{\nu}^-)$ let $(\bar{\nu}_s)$, $s\in [0,1]$ be the family of coherently connecting collections of sections given by $\bar{\nu}_s=(1-s)\cdot\bar{\nu}_0+s\cdot\bar{\nu}_1$, which defines a transversal section $\bar{\nu}$ in $\overline{\Coker}\CR_J$ over $\CM\times [0,1]$, possibly after small perturbation. Let $\If_v^s=\If_v^{(\bar{\nu}_s)}$ denote the corresponding family of local SFT potentials of $v$. Then it follows from the proof of the corresponding result for the standard SFT potential in (\cite{EGH}, section 2.4) that $\If_v^0$ and $\If_v^1$ are homotopic through the homotopy $\If_v^s$ in the sense of \cite{EGH}, i.e., there exists another family $k_v^s$ such that the family $s\mapsto \If_v^s$ satisfies the following Hamilton-Jacobi equation in $\hat{\LL}_{\Gamma'^+,\Gamma'^-}$, 
$$\frac{\del\If_v^s(p^+,q^-)}{\del s}=\IG_v\Bigl(p^+,\frac{\del\If_v^s(p^+,q^-)}{\del p^+}, \frac{\del\If_v^s(p^+,q^-)}{\del q^-}, q^-\Bigr),$$ where 
\begin{eqnarray*}
\IG_v(p^+,q^+,p^-,q^-)&=&\{\Ih_{\Gamma'^+} - \Ih_{\Gamma'^-},k_v^s\}\\&=&
\sum_{\gamma^{\pm}\in\Gamma'^{\pm}}\kappa_{\gamma^-}\frac{\del\Ih_{\Gamma'^-}(p^-,q^-)}{\del p_{\gamma^-}^-}
\frac{\del k_v^s(p^+,q^-)}{\del q_{\gamma^-}^-}\\&+&\kappa_{\gamma^+}\frac{\del k_v^s(p^+,q^-)}{\del p_{\gamma^+}^+}\frac{\del\Ih_{\Gamma'^+}(p^+,q^+)}{\del q_{\gamma^+}^+}.
\end{eqnarray*}
Since $\Ih_{\Gamma'^{\pm}}=\sum_{\gamma^{\pm}\in\Gamma'^{\pm}}\Ih_{\gamma^{\pm}}=0$,  as by the above theorem $\Ih_{\gamma^{\pm}}=\Ih_{\gamma^{\pm}}^{(\bar{\nu}^{\pm})}=0$ for all closed Reeb orbits $\gamma^{\pm}$ and all coherent collections of sections $(\bar{\nu}^{\pm})$, it follows that $\IG_v=0$, so that $\If_v^s$ must be independent of $s\in[0,1]$, in particular, $\If_v^0=\If_v^1\in\LL_{\Gamma'^+,\Gamma'^-}$. \end{proof}

We end this subsection by discussing how the local SFT potential $\If_v$ depends on the choice of coherent collections of sections $(\bar{\nu}^{\pm})$ for all closed Reeb orbits $\gamma^{\pm}\in \Gamma'^{\pm}$, where it will turn out that $\If_v$ indeed depends on this choice. Let $\LL_-$, $\LL_+$, $\LL$ be generated by $(q^-,p)$-, $(q,p^+)$- and $(q^-,p^+)$-variables, respectively. Following (\cite{EGH}, section 2.5) we define the operation $\#:\LL_-\times\LL_+\to\LL$ by 
$$(f_-\# f_+)(q^-,p^+)=(f_-(q^-,p)+f_+(q,p^+)-\sum_{\gamma}\kappa_{\gamma}^{-1} q_{\gamma} p_{\gamma})|_L$$ for $f_{\pm}\in\LL_{\pm}$, where $L$ is the Lagrangian in the symplectic super-space spanned by $(q^-,p^+)$-variables which is determined by the equations $$q_{\gamma}=\kappa_{\gamma}\frac{\del f_-}{\del p_{\gamma}},\;\; p_{\gamma}=\kappa_{\gamma}\frac{\del f_+}{\del q_{\gamma}}.$$ For chosen homotopies $(\bar{\nu}_{01}^+)$, $(\bar{\nu}_{10}^-)$ from $(\bar{\nu}_0^+)$ to $(\bar{\nu}_1^+)$, $(\bar{\nu}_1^-)$ to $(\bar{\nu}_0^-)$, respectively, let $\If_{\Gamma'^+}^{01}=\If_{\Gamma'^+}^{(\bar{\nu}_{01}^+)}\in$, $\If_{\Gamma'^-}^{10}=\If_{\Gamma'^-}^{(\bar{\nu}_{10}^-)}$ be the local SFT potential of the union of orbit cylinders in the cylindrical manifold equipped with non-cylindrical data. Note that it again follows from the above theorem that the local SFT potentials $\If_{\Gamma'^+}^{01}$ and $\If_{\Gamma'^-}^{10}$ are independent of the chosen homotopies. With this we can describe the change of the local SFT potential $\If_v$ under different choices of coherent collections of sections $(\bar{\nu}^{\pm})$ as follows. 

\begin{theorem} For two different choices of coherent collections of sections $(\bar{\nu}_0^{\pm})$ and $(\bar{\nu}_0^{\pm})$ we denote by $\If_v^0=\If_v^{(\bar{\nu}_0^{\pm})},\If_v^1=\If_v^{(\bar{\nu}_1^{\pm})}\in\LL_{\Gamma'^+,\Gamma'^-}$. Then we have $$\If_v^1= \If_{\Gamma'^-}^{10}\# \If_v^0\# \If_{\Gamma'^+}^{01}.$$ \end{theorem}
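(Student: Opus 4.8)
The plan is to mimic the gluing/cobordism argument from standard SFT, see (\cite{EGH}, section 2.5), transplanted to the local setting using the obstruction bundles constructed above. The starting observation is that, as for the local SFT Hamiltonians $\Ih_\gamma$ and for the potential $\If_v$ itself, the only nonregular moduli spaces whose perturbation data enter the count are the moduli spaces $\CM_{v,d}(\Gamma^+,\Gamma^-)$ of multiple covers of $v$ and the moduli spaces $\CM_{\gamma^\pm,d^\pm}(\Gamma_1^\pm,\Gamma_1^\pm)$ of multiple covers of the orbit cylinders over the asymptotic orbits of $v$; their codimension-one boundaries were identified in subsection 1.3. So the combinatorics of how a change of the collections $(\bar\nu^\pm)$ propagates is completely controlled by the same splitting formulas $\overline{\Coker}\CR_J|_{\CM_+\times\CM_0}=\pi_+^*\overline{\Coker}^+\CR_J\oplus\pi_0^*\overline{\Coker}^0\CR_J$ and $\overline{\Coker}\CR_J|_{\CM_0\times\CM_-}=\pi_0^*\overline{\Coker}^0\CR_J\oplus\pi_-^*\overline{\Coker}^-\CR_J$ that entered the definition of \emph{coherently connecting} sections.

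First I would set up the interpolating family. Pick a collection $(\bar\nu_s^\pm)$, $s\in[0,1]$, of coherent sections over the orbit-cylinder moduli spaces with $(\bar\nu_s^\pm)=(\bar\nu_0^\pm)$ near $s=0$ and $(\bar\nu_s^\pm)=(\bar\nu_1^\pm)$ near $s=1$, transversal after a small perturbation over $\CM_\pm\times[0,1]$; such a family exists by the same induction-on-dimension argument used to produce coherent collections. Then extend to a collection $(\bar\nu_s)$ of sections over $\CM_{v,d}(\Gamma^+,\Gamma^-)\times[0,1]$ coherently connecting $(\bar\nu_s^+)$ and $(\bar\nu_s^-)$ for each $s$, again transversal over $\CM\times[0,1]$. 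The $1$-parameter perturbed moduli spaces $\CM^{\bar\nu_s}_{v,d}(\Gamma^+,\Gamma^-)$ then form a compact cobordism, whose boundary at $s=0,1$ gives $\CM^{(\bar\nu_0^\pm)}_{v,d}$ and $\CM^{(\bar\nu_1^\pm)}_{v,d}$, and whose codimension-one boundary in the interior $s\in(0,1)$ consists, by the boundary description in subsection 1.3, of products with the $1$-parameter orbit-cylinder moduli spaces $\CM_{\gamma^\pm,d^\pm}^{\bar\nu_s^\pm}(\Gamma_1^\pm,\Gamma_1^\pm)\times[0,1]$. Counting the boundary of this cobordism and translating into generating functions is exactly the computation in (\cite{EGH}, section 2.4--2.5): it shows that $s\mapsto\If_v^s$ satisfies a Hamilton-Jacobi equation whose right-hand side is built from the ``transition potentials'' $\If^{01}_{\Gamma'^+}$ and $\If^{10}_{\Gamma'^-}$ recording the $1$-parameter orbit-cylinder counts, and integrating this equation from $s=0$ to $s=1$ yields precisely the composition formula $\If_v^1=\If_{\Gamma'^-}^{10}\#\,\If_v^0\#\,\If_{\Gamma'^+}^{01}$, with $\#$ the operation on $\LL$ defined just above the statement. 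One should also check that $\If^{01}_{\Gamma'^+}$ and $\If^{10}_{\Gamma'^-}$ are independent of the chosen homotopies $(\bar\nu_{01}^+)$, $(\bar\nu_{10}^-)$, which is the previous theorem applied to the orbit cylinders (equivalently, the vanishing $\Ih_{\gamma^\pm}=0$ forces the associated $\IG=0$).

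The main obstacle is analytic rather than algebraic: one has to know that the perturbed $1$-parameter moduli spaces $\CM^{\bar\nu_s}_{v,d}$ actually compactify correctly and that the linear gluing theorem of \cite{F2} extends verbatim over the parameter $s$, so that the cobordism has the asserted boundary strata and no extra contributions appear. This uses the index estimate from the proof of the first theorem above ($\ind u^+\ge0$ for breakings at the positive end, and its mirror at the negative end, via Long's results on Conley-Zehnder indices of iterated elliptic orbits), together with Proposition~3.1 applied to each stratum, to guarantee that only the boundary faces $\CM_\pm\times\CM_0$ and $\CM_0\times\CM_-$ occur and that $\overline{\Coker}\CR_J$ splits as a direct sum over them; everything else is then the formal manipulation of generating series carried out in \cite{EGH}. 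In short, the proof is: build the $s$-family of coherent transversal sections, read off the cobordism, apply the EGH generating-function bookkeeping, and use the vanishing of the orbit Hamiltonians to collapse the Hamilton-Jacobi flow into the stated $\#$-product.
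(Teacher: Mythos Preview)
Your proposal conflates two distinct mechanisms from \cite{EGH}: the Hamilton--Jacobi flow of section~2.4 (which governs a homotopy of data on a cobordism with \emph{fixed} asymptotic data) and the composition formula of section~2.5 (which governs gluing of cobordisms). The paper's proof uses only the latter, and does so in one step: by the EGH composition formalism, gluing the cylindrical ``homotopy cobordisms'' over $\Gamma'^{\pm}$ (with transition potentials $\If_{\Gamma'^+}^{01}$, $\If_{\Gamma'^-}^{10}$) onto $v$ equipped with $(\bar\nu_0^{\pm})$ produces a potential equal to $\If_{\Gamma'^-}^{10}\#\If_v^0\#\If_{\Gamma'^+}^{01}$; but the glued object is nothing other than $v$ equipped with \emph{some} collection of sections coherently connecting $(\bar\nu_1^+)$ and $(\bar\nu_1^-)$, and by the previous theorem the potential is independent of this choice, hence equals $\If_v^1$. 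No $1$-parameter family, no Hamilton--Jacobi equation, and no cobordism-with-corners analysis is needed here.

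Your route via an interpolating family $(\bar\nu_s^{\pm})$ runs into a genuine difficulty: the Hamilton--Jacobi equation you invoke has $\Ih_{\Gamma'^{\pm}}$ on the right-hand side, and since these vanish by the earlier theorem you would conclude $\partial_s\If_v^s=0$, i.e.\ $\If_v^0=\If_v^1$, which is \emph{not} the claimed identity (and the paper explicitly notes that $\If_v$ does depend on $(\bar\nu^{\pm})$). The point is that section~2.4 of \cite{EGH} assumes the data at the cylindrical ends are held fixed; when you also vary $(\bar\nu^{\pm}_s)$ the correct bookkeeping is not a Hamilton--Jacobi flow but precisely the gluing/composition formula of section~2.5. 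Your last sentence (``use the vanishing of the orbit Hamiltonians to collapse the Hamilton--Jacobi flow into the stated $\#$-product'') is where this confusion surfaces: vanishing of $\Ih_{\gamma^{\pm}}$ collapses the flow to a constant, not to a nontrivial $\#$-composition. The role of $\Ih_{\gamma^{\pm}}=0$ in the paper's argument is different and indirect---it is what makes the \emph{previous} theorem true, which in turn is the only nontrivial input beyond the EGH composition formalism.
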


Note that this theorem follows by combining the algebraic formalism for composition of cobordisms in (\cite{EGH}, section 2.5) with our above result stating that $\If_v$ is independent of the chosen collection of sections coherently connecting two coherent collections of sections. On the other hand, we want to emphasize that from the above result one can deduce as in \cite{EGH} a functoriality as known from Floer homology. To this end, observe that the local SFT potential $\If_v\in\LL_{\Gamma'^+,\Gamma'^-}$ defines a Lagrangian $L_{\If_v}$ in the symplectic super-space spanned by the $p^{\pm}$- and $q^{\pm}$-variables, $$L_{\If_v}=\{q^+=\frac{\del\If_v}{\del p^+}, p^-=\frac{\del\If_v}{\del q^-}\}. $$ Viewing functions in $\PP^{\pm}_{\Gamma^{\pm}}$ in the natural way as elements in the bigger space $\tilde{\LL}_{\Gamma'^+,\Gamma'^-}$ we follow \cite{EGH} and define maps $$f_v^{\pm}:\PP_{\Gamma'^{\pm}}\to\LL_{\Gamma'^+,\Gamma'^-},\; g\mapsto g|_{L_{\If_v}}.$$  It is shown in \cite{EGH} that the local SFT potentials $\If_{\Gamma'^+}^{01}$ and $\If_{\Gamma'^-}^{10}$ define not only automorphisms of $\PP_{\Gamma'^+}$ and $\PP_{\Gamma'^-}$, respectively, but also an automorphism of $\LL_{\Gamma'^+,\Gamma'^-}$. We get the following functorial property of the maps $f_v^{\pm}$. 

\begin{corollary} After applying the automorphisms of $\PP_{\Gamma'^+}$, $ \PP_{\Gamma'^-}$ and $\LL_{\Gamma'^+,\Gamma'^-}$ induced by $\If_{\Gamma'^+}^{01}$ and $\If_{\Gamma'^-}^{10}$, the map $f_v^{0,\pm}: \PP_{\Gamma'^{\pm}}\to\LL_{\Gamma'^+,\Gamma'^-}$ gets replaced by the map $f_v^{1,\pm}: \PP_{\Gamma'^{\pm}}\to\LL_{\Gamma'^+,\Gamma'^-}$. \end{corollary}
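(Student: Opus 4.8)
The plan is to deduce the corollary from the composition formula $\If_v^1=\If_{\Gamma'^-}^{10}\#\If_v^0\#\If_{\Gamma'^+}^{01}$ of the preceding theorem by unwinding the definitions of the $\#$-operation and of the maps $f_v^{\pm}$ in terms of Lagrangian correspondences, exactly as in (\cite{EGH}, section 2.5).

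First I would recall the relevant dictionary. A potential $\If\in\LL_{\Gamma'^+,\Gamma'^-}$ encodes the (formal) Lagrangian $L_{\If}=\{q^+=\del\If/\del p^+,\ p^-=\del\If/\del q^-\}$ in the symplectic super-space spanned by the $p^{\pm}$- and $q^{\pm}$-variables, which one views as a Lagrangian correspondence from the $(q^-,p^-)$-space to the $(q^+,p^+)$-space; the map $f_v^{\pm}\colon\PP_{\Gamma'^{\pm}}\to\LL_{\Gamma'^+,\Gamma'^-}$, $g\mapsto g|_{L_{\If_v}}$, is pullback of Hamiltonians along this correspondence. Likewise, each of $\If_{\Gamma'^+}^{01}$ and $\If_{\Gamma'^-}^{10}$ encodes a Lagrangian correspondence which, by the result quoted from \cite{EGH}, is the graph of a symplectomorphism of the corresponding orbit super-space; hence $\#$-multiplication by $\If_{\Gamma'^+}^{01}$ (resp. $\If_{\Gamma'^-}^{10}$) is precisely pre- (resp. post-) composition of the correspondence $L_{\If_v}$ with that symplectomorphism, and the induced automorphisms of $\PP_{\Gamma'^+}$, $\PP_{\Gamma'^-}$ and $\LL_{\Gamma'^+,\Gamma'^-}$ are the corresponding ''restrict to the graph'' maps.

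The key step is then functoriality of $L\mapsto(g\mapsto g|_L)$ under composition of correspondences: the $\#$-operation is set up so that $L_{f_-\#f_+}$ is the composition of $L_{f_-}$ and $L_{f_+}$, this being exactly the geometric content of the generating-function formula $(f_-(q^-,p)+f_+(q,p^+)-\sum_{\gamma}\kappa_{\gamma}^{-1}q_{\gamma}p_{\gamma})|_L$ with $L$ the matching locus $q_{\gamma}=\kappa_{\gamma}\del f_-/\del p_{\gamma}$, $p_{\gamma}=\kappa_{\gamma}\del f_+/\del q_{\gamma}$. Applying this to $\If_v^1=\If_{\Gamma'^-}^{10}\#\If_v^0\#\If_{\Gamma'^+}^{01}$ gives $L_{\If_v^1}=L_{\If_{\Gamma'^-}^{10}}\circ L_{\If_v^0}\circ L_{\If_{\Gamma'^+}^{01}}$, so that restricting $g\in\PP_{\Gamma'^{\pm}}$ to $L_{\If_v^1}$ equals: first apply the automorphism of $\PP_{\Gamma'^{\pm}}$ induced by $\If_{\Gamma'^{\pm}}^{01}$ (resp. $\If_{\Gamma'^-}^{10}$), then restrict to $L_{\If_v^0}$, then apply the automorphism of $\LL_{\Gamma'^+,\Gamma'^-}$ induced by the pair $\If_{\Gamma'^+}^{01},\If_{\Gamma'^-}^{10}$. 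This is exactly the assertion that $f_v^{0,\pm}$ is replaced by $f_v^{1,\pm}$.

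The main obstacle I expect is bookkeeping rather than substance: one must check that all the Legendre-transform / ''restriction to a Lagrangian'' manipulations are well-defined in the relevant completed algebras $\LL_{\Gamma'^+,\Gamma'^-}$, $\hat{\LL}_{\Gamma'^+,\Gamma'^-}$, $\tilde{\LL}_{\Gamma'^+,\Gamma'^-}$ and $\PP_{\Gamma'^{\pm}}$, that the composed correspondence is transverse (clean) so that its generating function is again a well-defined element of $\LL_{\Gamma'^+,\Gamma'^-}$, and that the chirality conventions ($p^+$ versus $q^+$ on the two ends, and the signs coming from the super-grading) are consistent so that the graphs compose in the stated order $\If_{\Gamma'^-}^{10}\#\,\cdot\,\#\If_{\Gamma'^+}^{01}$. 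All of this is handled as in \cite{EGH}, and once the dictionary is in place the corollary is a formal consequence of the theorem.
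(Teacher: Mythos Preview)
Your proposal is correct and matches the paper's approach: the paper states the corollary without proof, indicating only that it follows ``as in \cite{EGH}'' from the composition formula of the preceding theorem together with the Lagrangian-correspondence formalism of (\cite{EGH}, section 2.5). You have simply unpacked this reference, and the argument via $L_{f_-\#f_+}$ being the composition of $L_{f_-}$ and $L_{f_+}$ is exactly the intended one.
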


\section{Application: Stable hypersurfaces intersecting exceptional spheres}

Instead of discussing the full TQFT picture involving splitting and gluing of the underlying immersed curves with elliptic orbits, in this section we will show how local SFT methods can be applied to embedding problems in symplectic geometry. More precisely, we will show that every stable hypersurface which intersects an exceptional sphere in a closed four-dimensional symplectic manifold in a homological nontrivial way must carry an elliptic orbit. \\
 
\subsection{Additional marked points and gravitational descendants}
For this we use that a closed rigid nicely-embedded curve $v:(S^2,i)\to(X,J)$ is an \emph{exceptional sphere}. Indeed, it follows from the definitions that $\ind(v)=0$ and 
$\delta(v)=0$, so that $2c_N(v)=\ind v - 2 + 2g + \#\Gamma_0=-2$ and hence (with the absence of asymptotic intersections) $[v]\cdot[v]=i(v,v)=2\delta(v)+c_N(v)=-1$. 
On the other hand, since $c_1(v)=c_1(v^*TX)=c_1(N_v)+c_1(TS^2)=c_N(v)+2=1$ we get that the Fredholm index of a $d$-fold multiple cover $u=v\circ\varphi$ in the moduli space 
$\CM_{v,d}$ is given by $\ind(u)=-2+2c_1(u)=-2+2d=2(d-1)$ and hence strictly positive for $d>1$. \\

In order to get interesting contributions, it follows that we need to enrich 
the local Gromov-Witten potential by introducing marked points on the curve. As in standard Gromov-Witten theory these can be used to pull-back cohomology classes from the target 
which in turn can be integrated over the moduli space. On the other hand, since all maps $u$ in $\CM_{v,d}$ factor through $v$, $u=v\circ\varphi$, it follows that we can only 
expect to get non-zero integrals when the degree of the form is two or less. Since adding one additional marked point enlarges the dimension of the moduli space by two, it follows 
that this way we will do not get contributions from higher-dimensional moduli spaces. We solve this problem by additionally introducing gravitational descendants. \\

Let $\CM_{v,d,r}$ denote the moduli space of $d$-fold coverings $u=v\circ\varphi$ carrying $r$ additional marked points. In order to save notation, instead of integrating the 
pull-back of the canonical two-form on the sphere over the moduli space, we directly want to assume that every additional marked point $z_i$ gets mapped to a special marked point 
$p_i$ on the exceptional sphere under the covering map $\varphi$. Note that as an immediate consequence of the divisor equation in Gromov-Witten theory we get that $\CM_{v,d,r}$ 
is given by $d^r$ copies of the moduli space $\CM_{v,d}=\CM_{v,d,0}$ without additional marked points, where $d^r$ is the number of preimages of the special points $p_1,\ldots,p_r$ 
under the $d$-fold covering $\varphi$. \\

On the other hand, with the help of the additional marked points we can introduce $r$ tautological line 
bundles $\LL_1,\ldots,\LL_r$ over each moduli space $\CM_{v,d,r}$. They are defined as the pull-back of the vertical cotangent line bundle of 
$\pi_i: \CM_{v,d,r+1}\to\CM_{v,d,r}$ under the canonical section $\sigma_i: \CM_{v,d,r}\to\CM_{v,d,r+1}$ mapping to the $i$-th marked point in the fibre. It follows that the 
fibre $\LL_i$ over a smooth curve $(u,z_1,\ldots, z_r)$ is given by the cotangent line to the underlying Riemann sphere at the $i$.th marked point, 
$(\LL_i)_{((u,z_1,\ldots, z_r)}=T_{z_i}S^2$. With this the local Gromov-Witten potential can be enriched by integrating products $\psi_1^{j_1}\wedge\ldots\wedge\psi_r^{j_r}$ 
of powers of the first Chern classes $\psi_i=c_1(\LL_i)$, $i=1,\ldots,r$ over the moduli spaces $\CM_{v,d,r}$. It follows from the work in \cite{OP} for the case when the 
target manifold is a complex curve that the divisor $\CM^{(j_1,\ldots,j_r)}_{v,d,r}\subset\CM_{v,d,r}$ Poincare-dual to $\psi_1^{j_1}\wedge\ldots\wedge\psi_r^{j_r}$ has a 
geometric interpretation in terms of branching conditions. \\

\subsection{Obstruction bundle = normal bundle using topological recursion}
Instead of discussing the general statement, from now on let us restrict to the simplest non-trivial case $d=2$, $r=1$, $j=1$. Here it follows that the submoduli space 
$\CM_{v,2,1}^1\subset\CM_{v,2,1}$ consists of two-fold coverings $u=v\circ\varphi$ with one marked point mapping to the special point on the exceptional sphere which is 
additionally required to be a branch point of $\varphi$. While the (real) dimension of the (unperturbed) moduli space is two which accounts for the second branch point of 
the covering map $\varphi: S^2\to S^2$, the expected dimension of the moduli space $\CM_{v,2,1}^1$ given by the Fredholm index is $2(2-1)-2=0$. Apart from the fact that 
transversality for the Cauchy-Riemann operator cannot be satisfied, it follows our work from the last section there is an obstruction bundle $\overline{\Coker}\CR_J$ over 
$\CM_{v,2,1}^1$ of rank two. \\

Since our curves have no punctures and hence there is no codimension-one boundary of the moduli space, it follows that the count of elements in 
the resulting perturbed moduli space $(\CM_{v,2,1}^1)^{\bar{\nu}}=\bar{\nu}^{-1}(0)\subset\CM_{v,2,1}^1$ is independent of the chosen section $\bar{\nu}$ in $\overline{\Coker}\CR_J$.
Using topological recursion relations for descendants in Gromov-Witten theory we will show the count of elements $\#\CM_{v,2,1}^{1,\bar{\nu}}$ is (up to a combinatorical factor 
coming from the divisor equation) given by homological self-intersection number of the exceptional sphere. 

\begin{theorem} We have $\#(\CM_{v,2,1}^1)^{\bar{\nu}}=-\frac{1}{4}$. \end{theorem}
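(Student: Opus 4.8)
The strategy is to identify the obstruction bundle $\overline{\Coker}\CR_J$ over $\CM_{v,2,1}^1$ with the (pulled-back) normal bundle of the exceptional sphere $\Sigma=v(S^2)$, so that the signed count $\#(\CM_{v,2,1}^1)^{\bar{\nu}}$ equals a Chern/Euler number which can be evaluated by topological recursion in local Gromov--Witten theory. First I would set up the geometry explicitly: a point of $\CM_{v,2,1}^1$ is a two-fold branched cover $\varphi\colon S^2\to S^2$ with the marked point $z_1$ mapped to the special point $p_1\in\Sigma$ and required to be one of the two branch points of $\varphi$; after fixing $z_1$ (using the remaining automorphisms of the domain), the moduli space is parametrized by the location of the \emph{second} branch point, so $\CM_{v,2,1}^1\cong\CP$, a smooth closed surface of real dimension two. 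The Fredholm index being $2(d-1)-2=0$ while the unperturbed moduli space has dimension two forces the obstruction bundle to have rank two, consistent with Corollary~1.3 and Theorem~1.5 (which guarantee $\overline{\Coker}_v\CR_J$ is a genuine smooth bundle over $\CM_{v,2,1}^1$, since all relevant indices are $\le 0$). The signed count is then $\langle e(\overline{\Coker}\CR_J),[\CM_{v,2,1}^1]\rangle$.

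The key computational step is the identification $\overline{\Coker}_v\CR_J \cong \LL_1 \otimes (\text{twist})$, or more precisely relating the Euler class of the obstruction bundle to a descendant class on $\CM_{v,2,1}$. Here I would use the local structure of $D_u$: since $v$ is nicely embedded with $c_1(N_v)$ computed in subsection~2.1 (giving $c_N(v)=-1$, $[v]\cdot[v]=-1$, $c_1(v)=1$), the cokernel of $D_u$ for $u=v\circ\varphi$ is governed entirely by the normal operator $D_u^N$, whose cokernel is the space of antiholomorphic normal sections with appropriate asymptotics—i.e.\ sections of $N_v^*\otimes\bar K_{S^2}$ pulled back via $\varphi$, vanishing to the order dictated by the branching. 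Combining the degree count ($\deg N_v = -1$ on each sphere) with the branching condition at $z_1$, the rank-two obstruction bundle is a sum of two line bundles whose first Chern classes are expressible through $\psi_1=c_1(\LL_1)$ and the self-intersection $[\Sigma]^2=-1$. The Euler number then takes the form $[\Sigma]^2 \cdot (\text{a rational combinatorial factor})$, and the factor $\tfrac14$ is precisely the one arising from the divisor-type multiplicity: $\CM_{v,2,1}$ is $d^r=2$ copies of $\CM_{v,2}$, the automorphism/multiplicity weights contribute $\tfrac{1}{s^+!s^-!\kappa_{\Gamma^+}\kappa_{\Gamma^-}}$-style denominators, and the branched-cover symmetry $\IZ/2$ halves the count once more, yielding $[\Sigma]^2\cdot\tfrac14=-\tfrac14$.

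To make the Euler-number evaluation rigorous rather than heuristic I would invoke the topological recursion relation for gravitational descendants (as in the work of Okounkov--Pandharipande cited as \cite{OP} for target a curve, adapted here since all maps factor through $v$): the class $\psi_1$ on $\CM_{v,2,1}$ satisfies a recursion expressing $\langle\psi_1\rangle_{v,2}$ in terms of boundary strata of the Deligne--Mumford space $\overline{\IM}_{0,n}$ and lower descendants, which for $d=2$, $r=1$, $j=1$ collapses to a single explicit term. I would then match this against the obstruction-bundle count using the comparison between the local GW descendant invariant and the SFT-style count $\#\CM^{\bar\nu}_{v,d}$, which is legitimate because—as noted before the theorem—there is no codimension-one boundary (the curves are closed), so $\#(\CM_{v,2,1}^1)^{\bar\nu}$ is a genuine homotopy invariant equal to the descendant integral. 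The main obstacle I anticipate is precisely this last identification: pinning down all the combinatorial normalization factors (the $\kappa$'s, the $\tfrac{1}{d!}$ from covering symmetry, and the $d^r$ from the divisor equation) so that the bookkeeping produces exactly $-\tfrac14$ and not some other multiple of the self-intersection number; the analytic input (existence and rank of the obstruction bundle, identification with $\varphi^*N_v$-type data) is already essentially supplied by Theorem~1.2, Corollary~1.3 and Theorem~1.5, so the real work is the careful weighted count.
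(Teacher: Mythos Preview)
Your proposal has the right ingredients (topological recursion, the normal bundle, the self-intersection $[v]\cdot[v]=-1$), but the way you assemble them contains a genuine gap. You attempt to identify the obstruction bundle \emph{directly} over the double-cover moduli space $\CM_{v,2,1}^1\cong\CP$ as some twist of $\varphi^*N_v$ by $\LL_1$, and then invoke TRR only to ``make this rigorous''. But the cokernel of $D_u^N$ for a genuine double cover $u=v\circ\varphi$ is not easily described this way; the operator sees the branching globally and there is no simple splitting into line bundles expressible through $\psi_1$ and $[\Sigma]^2$ as you claim. The paper's proof uses TRR in a different and essential way: after first applying the divisor equation twice to pass from one to three marked points (this is the source of the factor $\tfrac14=\tfrac1{d^2}$, not the $\IZ/2$-symmetry or $\kappa$-weights you invoke), TRR replaces the double-cover problem by a fibre product $\CM_{v,1,1}\times_{\ev}\CM_{v,1,1}$ of \emph{simple} spheres meeting at a node. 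The point is that simple curves are automatically regular, so the entire obstruction localizes to the failure of the evaluation map to meet the diagonal transversally; a short linear-algebra argument then identifies the obstruction bundle over $\CM_{v,1,1}\times_{\ev}\CM_{v,1,1}\cong S^2$ with $N_v$ itself, whose Euler number is $-1$.

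Your justification of the $\tfrac14$ is also incorrect as written: the factor does not arise from $d^r=2$, orbifold weights, and a further halving by the hyperelliptic involution; it comes cleanly from the inverse divisor equation $\#(\CM_{v,2,1}^1)^{\bar\nu}=\tfrac{1}{d^2}\#(\CM_{v,2,3}^{(1,0,0)})^{\bar\nu}$ with $d=2$. Without the reduction to simple nodal curves you have no clean computation of the Euler class, and without the correct bookkeeping for the $\tfrac14$ the final number would not come out right even if the Euler class were known.
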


\begin{proof} The idea of the proof is that, using the topological recursion relations of Gromov-Witten theory, we can relate the above moduli space to the moduli space 
of doubly-covered spheres with one node. Since both components need to be simply-covered spheres which are automatically regular, the transversality problem in the Banach 
space bundle localizes on the nodal coincidence relation and hence reduces to geometric transversality. Since we need three marked points to apply topological recursion relations, 
we first apply the divisor equation twice to get $\#(\CM_{v,2,1}^1)^{\bar{\nu}}=\frac{1}{4}\cdot\#(\CM_{v,2,3}^{(1,0,0)})^{\bar{\nu}}$. Applying topological recursion relations we get 
that $$\#\CM_{v,2,3}^{(1,0,0),\bar{\nu}}=\#(\CM_{v,1,3}\times_{\ev}\CM_{v,1,2})^{\bar{\nu}}=\#(\CM_{v,1,1}\times_{\ev}\CM_{v,1,1})^{\bar{\nu}}$$ where 
$$\CM_{v,1,1}\times_{\ev}\CM_{v,1,1}=\{((u_1,w_1),(u_2,w_2)): u_1(w_1)=u_2(w_2)\}$$ and the second equality follows by applying the divisor equation in the reverse direction. 
Since we can assume after applying an automorphism of the domain that the simple covering $\varphi:S^2\to S^2$ in $u=v\circ\varphi$ is the identity and hence $u=v$, note that 
we can identify $\CM_{v,1,1}$ and hence also $\CM_{v,1,1}\times_{\ev}\CM_{v,1,1}$ with $S^2$ via $(u,z)=:z$. We now want to show that the obstruction bundle over 
$\CM_{v,1,1}\times_{\ev}\CM_{v,1,1}\cong S^2$ is given by the normal bundle to the exceptional sphere. \\
$ $\\
\emph{Claim:} $\overline{\Coker}\CR_J\cong N_v$. \\

For this we make use of the fact that by proving transversality of the Cauchy-Riemann operator in the Banach space bundle $\EE\oplus\EE$ over the Banach submanifold 
$$\BB\times_{\ev}\BB=\{(u_1,w_1),(u_2,w_2):u_1(w_1)=u_2(w_2)\}\subset\BB\times\BB$$ containing $\IM\times_{\ev}\IM$ ($\IM=\IM_{v,1,1}$) we do not only get transversality for 
the Cauchy-Riemann operator in $\EE\oplus\EE$ over the Banach manifold $\BB\times\BB$ of disconnected curves, but we also get that the evaluation map $\ev:\IM\times\IM\to X\times X$, 
$((u_1,w_1),(u_2,w_2))\mapsto(u_1(w_1),u_2(w_2))$ is transversal to the diagonal in $X\times X$ (see \cite{F3} for a proof of this lemma). It follows that the fibre of the cokernel bundle at 
$z\in S^2\cong\CM\times_{\ev}\CM$ is given by $(\overline{\Coker}\CR_J)_z=\coker D^N_z$, where $D^N_z$ is the restriction of the componentwise linearization in the normal direction 
$$D_N: H^{1,p}(N_v)\oplus H^{1,p}(N_v)\to L^p(\Lambda^{0,1}\otimes_{i,J} N_v)\oplus L^p(\Lambda^{0,1}\otimes_{i,J} N_v)$$ to the subspace 
$\{(\xi_1,\xi_2)\in H^{1,p}(N_v)\oplus H^{1,p}(N_v):\xi_1(z)=\xi_2(z)\}$. Since both components are simple and hence regular, and hence $D_N$ is an isomorphism, it follows that 
\begin{eqnarray*}
\coker D^N_z &\cong& \frac{H^{1,p}(N_v)\oplus H^{1,p}(N_v)}{\{(\xi_1,\xi_2)\in H^{1,p}(N_v)\oplus H^{1,p}(N_v):\xi_1(z)=\xi_2(z)\}}\\
             &\cong& \frac{(N_v)_z\oplus (N_v)_z}{\Delta} \cong (N_v)_z. 
\end{eqnarray*}
Putting everything together we get $$\#(\CM\times_{\ev}\CM)^{\bar{\nu}}=\int_{\CM\times_{\ev}\CM} e(\overline{\Coker}\CR_J)=\int_{S^2} e(N_v)=[v]\cdot[v]=-1.$$
\end{proof}

\subsection{Equations for the local SFT potentials}
We now want to turn again from local Gromov-Witten theory to local SFT. To this end we consider a stable hypersurface $V$ in the symplectic manifold $X$ which intersects the exceptional sphere $\Sigma:=v(S^2)$. Assuming that this intersection is homologically non-trivial in the sense that the union of circles $C=\Sigma\cap V$ defines a non-zero class in $H_1(V)$, it follows that after neck-stretching along $V$  (see \cite{BEHWZ}) the closed holomorphic sphere $v$ breaks up into two punctured holomorphic curves $v^+$ and $v^-$ (possibly with several connected components), connected by a collection $\Gamma$ of closed Reeb orbits on $V$ in the sense that $\Gamma$ is the set of the negative or positive asymptotic orbits of the punctured holomorphic curves $v^+$ and $v^-$, respectively. \\

For notational simplicity let us assume that $\Gamma$ just consists of a single closed Reeb orbit $\gamma$ and $V$ is separating, $X=X^+\cup_V X^-$, $V=\mp X^{\pm}$. Then the holomorphic sphere $v:(S^2,i)\to(X,J)$ breaks up into two holomorphic planes $v^{\pm}:(\IC,i)\to(X^{\pm},J^{\pm})$. Note that we continue not to distinguish between the compact symplectic manifolds with boundary $X^{\pm}$ and their completions $X^{\pm}\cup \IR^{\mp}\times V$ which are symplectic manifolds with cylindrical ends in the sense of \cite{BEHWZ}. On the other hand, since $\ind(v)=0$, we get from index additivity and regularity that $\ind(v^+)=\ind(v^-)=0$. \\

For the moment let us further assume that $v^+$ and $v^-$ are again immersed curves with elliptic orbits and that $\gamma$ is elliptic. We want to use our computation for the local Gromov-Witten potential of $v$ to prove results about the moduli spaces $\CM_{v^+,2}(\emptyset,\Gamma)$ and $\CM_{v^-,2}(\Gamma,\emptyset)$ from local SFT, where it turns out that the result depends on the behaviour of the Conley-Zehnder index for the multiple covered orbits $\gamma^k$. Recall that for every elliptic orbit $\gamma$ there exists an irrational number $\theta$ such that for the Conley-Zehnder indices we have $\CZ(\gamma^k)=2[k\theta]+1$, where $[x]$ denotes the largest integer less or equal than $x$. It follows that $\CZ(\gamma^2)-2\CZ(\gamma)=2([2\theta]-2[\theta])-1\in\{-1,+1\}$. Introducing additional marked points and gravitational descendants as in local Gromov-Witten theory using branching conditions to define moduli spaces $\CM_{v^+,2,1}^1(\emptyset,\Gamma)$ and $\CM_{v^-,2,1}^1(\Gamma,\emptyset)$, we now prove the following theorem.

\begin{theorem} If $\CZ(\gamma^2)-2\CZ(\gamma)=-1$ then 
$$\#\CM_{v^-,2}^{\bar{\nu}}(\gamma^2,\emptyset)+\#(\CM_{v^+,2,1}^1)^{\bar{\nu}}(\emptyset,(\gamma,\gamma)) =\#(\CM_{v^-,2,1}^1)^{\bar{\nu}}((\gamma,\gamma),\emptyset)=  -\frac{1}{4};$$
if $\CZ(\gamma^2)-2\CZ(\gamma)=+1$ then
$$\#\CM_{v^+,2}^{\bar{\nu}}(\emptyset,\gamma^2) +\#(\CM_{v^-,2,1}^1)^{\bar{\nu}}((\gamma,\gamma),\emptyset) =\#(\CM_{v^+,2,1}^1)^{\bar{\nu}}(\emptyset,(\gamma,\gamma))=  -\frac{1}{4}.$$
In particular, while the summands on the left side depend on the choice of coherent obstruction bundle sections $(\bar{\nu})$ for $\gamma$, the sum is independent of this choice.
\end{theorem}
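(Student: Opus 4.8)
The input is the identity $\#(\CM_{v,2,1}^1)^{\bar{\nu}}=-\tfrac{1}{4}$ proved above, together with the observation that this number is independent of $\bar{\nu}$ and of the chosen special point $p\in\Sigma=v(S^2)$ --- the moduli space $\CM_{v,2,1}^1$ consists of closed curves, hence has no codimension-one boundary, and the $p$-independence is the divisor equation. The plan is to compute the same number a second way, by stretching the neck along $V$, so that $(X,J)$ degenerates to $(X^+,J^+)\cup(\IR\times V)\cup(X^-,J^-)$ and the exceptional sphere $v$ to the two-level building $v^+\cup v^-$. By SFT compactness \cite{BEHWZ} and an obstruction-bundle gluing theorem across the neck in the spirit of \cite{F2},\cite{F3}, every double cover $u=v\circ\varphi\in\CM_{v,2,1}^1$ degenerates to a two-level building $(u^+,u^-)$, where $u^{\pm}=v^{\pm}\circ\varphi^{\pm}$ is a double cover of $v^{\pm}$, matched along the asymptotic orbits over $\gamma$; and since $\Ih_{\gamma}=0$ by the theorem above, after choosing $\bar{\nu}$ coherently connecting the fixed collections $(\bar{\nu}^{\pm})$ the buildings with a nontrivial level in $\IR\times V$ contribute nothing net, by the same mechanism used to prove that $\If_v$ is independent of the connecting sections. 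Hence $-\tfrac{1}{4}=\#(\CM_{v,2,1}^1)^{\bar{\nu}}$ equals the signed count of rigid two-level buildings $(u^+,u^-)$.

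Next I would classify these buildings. Since $v^{\pm}$ are planes, the degree-$2$ covering $\varphi\colon S^2\to S^2$ has exactly two branch points, and Riemann--Hurwitz applied to $\varphi$ over each side of the neck leaves only two possibilities: either (i) the neck circle is covered connectedly, the matching orbit is $\gamma^2$, and each side carries one branch point; or (ii) the neck circle is covered by two circles, the matching orbit set is $(\gamma,\gamma)$, both branch points lie on one side (whose covering map is a branched double cover of a plane by a cylinder), and the opposite side carries no branch point and is a disjoint union of two simply-covered copies of $v^+$ or of $v^-$. The descendant marked point has to be a branch point mapping to $p$; placing $p$ on the $v^+$- or on the $v^-$-side (divisor equation) puts the branching constraint on $u^+$ or on $u^-$ and forces the marked point to coincide with an actual branch point of that side. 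A Fredholm-index count --- using the multiplicativity of the relative first Chern number under the cover (so $c_1(u^{\pm})=2c_1(v^{\pm})$), the elliptic relation $\CZ(\gamma^2)=2\CZ(\gamma)+x$ with $x:=\CZ(\gamma^2)-2\CZ(\gamma)\in\{-1,+1\}$ recalled above, and the hypotheses $\ind v^+=\ind v^-=0$ --- then shows that the branched $(\gamma,\gamma)$-cover carrying the branching constraint, as well as each disjoint simply-covered copy of $v^+$ or $v^-$, has index $0$, so the $(\gamma,\gamma)$-building is always rigid; whereas the $\gamma^2$-cover carrying the branching constraint has index $0$ precisely when $p$ lies on the $v^+$-side and $x=-1$, or on the $v^-$-side and $x=+1$ (and index $-2$, hence is empty, for the other placement), and in that rigid case the $\gamma^2$-cover on the opposite side automatically has index $0$ as well.

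Assembling, take first $x=-1$. With $p$ on the $v^+$-side the rigid buildings are the $\gamma^2$-building $\bigl(u^+\in\CM_{v^+,2,1}^1(\emptyset,\gamma^2),\ u^-\in\CM_{v^-,2}(\gamma^2,\emptyset)\bigr)$ and the $(\gamma,\gamma)$-building $\bigl(u^+\in\CM_{v^+,2,1}^1(\emptyset,(\gamma,\gamma)),\ u^-=v^-\sqcup v^-\bigr)$; using $\#\CM_{v^{\pm},1}(\cdots)=1$, the equality $\#(\CM_{v^+,2,1}^1)^{\bar{\nu}}(\emptyset,\gamma^2)=1$ (a unique branched double cover of the plane ramified over $p$, carrying a rank-zero obstruction bundle), and the prefactors $1/(s^{\pm}!\,\kappa_{\Gamma^{\pm}})$ of the local SFT potentials, the gluing theorem gives
$$-\tfrac{1}{4}=\#\CM_{v^-,2}^{\bar{\nu}}(\gamma^2,\emptyset)+\#(\CM_{v^+,2,1}^1)^{\bar{\nu}}(\emptyset,(\gamma,\gamma)).$$
With $p$ on the $v^-$-side instead, the $\gamma^2$-building is empty (its marked side then has index $-2$), so the only rigid building is $\bigl(u^-\in\CM_{v^-,2,1}^1((\gamma,\gamma),\emptyset),\ u^+=v^+\sqcup v^+\bigr)$, whence $-\tfrac{1}{4}=\#(\CM_{v^-,2,1}^1)^{\bar{\nu}}((\gamma,\gamma),\emptyset)$. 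Chaining these two identities proves the case $x=-1$; the case $x=+1$ follows by interchanging $v^+$ and $v^-$ throughout. Finally, each summand on the left depends on $\bar{\nu}|_{\gamma}$, because $\CM_{v^-,2}(\gamma^2,\emptyset)$ and $\CM_{v^+,2,1}^1(\emptyset,(\gamma,\gamma))$ consist of punctured curves in $X^{\pm}$ and hence carry codimension-one boundary along $V$; but their sum is the full neck-stretched degeneration of the closed, boundary-free count $\#(\CM_{v,2,1}^1)^{\bar{\nu}}=-\tfrac{1}{4}$, which --- again since $\Ih_{\gamma}=0$ annihilates the symplectization-level contributions --- is independent of all choices.

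The technical heart, and the main obstacle, is the obstruction-bundle gluing step: one must show that the perturbed moduli space of the fully stretched, non-regular problem is in bijection --- compatibly with the chosen obstruction-bundle sections --- with the fibre product over the matching orbits of the perturbed moduli spaces on the two sides, and that the buildings with intermediate symplectization levels drop out once $\bar{\nu}$ is coherent (using $\Ih_{\gamma}=0$); this is the local-SFT analogue of the linear gluing of \cite{F2},\cite{F3}, and the immersedness of $v^{\pm}$ together with the ellipticity of $\gamma$ --- hence the clean obstruction-bundle picture of subsection~1.3 --- are exactly what make it available. A secondary, purely bookkeeping point is to verify that the normalization factors $1/(s^{\pm}!\,\kappa_{\Gamma^{\pm}})$, the symmetry of the disjoint simple pair, and the count of the cover ramified over $p$ combine so that the gluing formula comes out as a plain sum with $-\tfrac{1}{4}$ on the right-hand side.
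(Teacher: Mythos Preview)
Your proposal is correct and follows essentially the same approach as the paper: stretch the neck along $V$, classify the resulting two-level buildings according to whether the matching orbit set is $\gamma^2$ or $(\gamma,\gamma)$, use the index/dimension analysis (driven by $\CZ(\gamma^2)-2\CZ(\gamma)=\pm 1$) to determine which product moduli spaces are rigid and carry the obstruction bundle, and equate the resulting sum to $\#(\CM_{v,2,1}^1)^{\bar{\nu}}=-\tfrac{1}{4}$ for each of the two placements of the special point $p$. You are more explicit than the paper in several places---invoking Riemann--Hurwitz for the branch-point classification, stating that $\#(\CM_{v^+,2,1}^1)^{\bar{\nu}}(\emptyset,\gamma^2)=1$, appealing to $\Ih_{\gamma}=0$ to kill symplectization-level contributions, and flagging the combinatorial normalization---but none of this is a departure from the paper's argument, which simply asserts the corresponding facts (``automatically regular'', ``dimension reasons'') without writing them out.
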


\begin{proof} Let $u_n=v\circ\varphi_n$ be a sequence of multiple covers of the exceptional sphere. After neck-stretching along the hypersurface $V$ it follows from the 
compactness result in \cite{BEHWZ} that a subsequence converges to broken holomorphic curve $(u^+,u^-)$, which are multiple covers of the holomorphic planes $v^+$, $v^-$, 
respectively, $u^{\pm}=v^{\pm}\circ\varphi^{\pm}$. It follows that via compactness and gluing the moduli space $\CM_{v,2}$ is related to the union of moduli spaces 
$\bigcup_{\Gamma}\CM_{v^+,2}(\emptyset,\Gamma)\times\CM_{v^-,2}(\Gamma,\emptyset)$ of possibly disconnected curves, where $\Gamma=(\gamma,\gamma)$ or $\Gamma=\gamma^2$. 
It follows that, depending on whether we choose the special point on $\Sigma=v(S^2)$ on $\Sigma^+=\Sigma\cap X^+$ or $\Sigma^-=\Sigma\cap X^-$, we get that $\CM_{v,2,1}^1$  
is related to the union of moduli spaces $\bigcup_{\Gamma}\CM^1_{v^+,2,1}(\emptyset,\Gamma)\times\CM_{v^-,2}(\Gamma,\emptyset)$ or 
$\bigcup_{\Gamma}\CM_{v^+,2}(\emptyset,\Gamma)\times\CM^1_{v^-,2,1}(\Gamma,\emptyset)$. \\

In the case when $\Gamma=(\gamma,\gamma)$ note that the curves $u^+$ in 
$\CM_{v^+,2}(\emptyset,(\gamma,\gamma))$ and $u^-$ in $\CM_{v^-,2}((\gamma,\gamma),\emptyset)$ are either cylinders with two negative or positive punctures or pairs of two simple 
holomorphic planes. Since in the latter curves do not carry branch points, it follows that the curves in $\CM^1_{v^+,2,1}(\emptyset,(\gamma,\gamma))$ and 
$\CM^1_{v^-,2,1}((\gamma,\gamma),\emptyset)$ are cylinders, so that the corresponding moduli spaces $\CM^1_{v^-,2}((\gamma,\gamma),\emptyset)$ and 
$\CM_{v^+,2}(\emptyset,(\gamma,\gamma))$ must consist of pairs of simple holomorphic planes. While the latter are automatically regular, it follows from the index and dimension 
additivity that there is an obstruction bundle of rank two over the two-dimensional moduli spaces $\CM^1_{v^+,2,1}(\emptyset,(\gamma,\gamma))$ and 
$\CM^1_{v^-,2,1}((\gamma,\gamma),\emptyset)$, where the two (real) dimensions again account for the second branch point. Since the count of regular curves is clear, it follows 
that the algebraic count of these moduli spaces is given by $\#(\CM^1_{v^+,2,1})^{\bar{\nu}}(\emptyset,(\gamma,\gamma))$ and 
$\#(\CM^1_{v^-,2,1})^{\bar{\nu}}((\gamma,\gamma),\emptyset)$, respectively. \\

In the case when $\Gamma=\gamma^2$, it follows that the curves $u^+$ in $\CM_{v^+,2}(\emptyset,\gamma^2)$ and $u^-$ in $\CM_{v^-,2}(\gamma^2,\emptyset)$ are holomorphic planes. When 
$\CZ(\gamma^2)-2\CZ(\gamma)=-1$ then $\ind(u^+)=2$ and $\ind(u^-)=0$; similarly, when $\CZ(\gamma^2)-2\CZ(\gamma)=+1$ then $\ind(u^+)=0$ and $\ind(u^-)=2$. Since 
$\dim\CM_{v^+,2}(\emptyset,\gamma^2)=\dim\CM_{v^-,2}(\gamma^2,\emptyset)=2$ and $\dim\CM^1_{v^+,2,1}(\emptyset,\gamma^2)=\dim\CM^1_{v^-,2,1}(\gamma^2,\emptyset)=0$, it follows 
from dimension reasons that in the first case we only get contributions from moduli spaces $\CM^1_{v^+,2,1}(\emptyset,\gamma^2)\times\CM_{v^-,2}(\gamma^2,\emptyset)$, while in the 
second case we only get contributions from moduli spaces $\CM_{v^+,2}(\emptyset,\gamma^2)\times\CM^1_{v^-,2,1}(\gamma^2,\emptyset)$. While the curves in 
$\CM^1_{v^+,2,1}(\emptyset,\gamma^2)$ and $\CM^1_{v^-,2,1}(\gamma^2,\emptyset)$ are automatically regular, it follows that we have obstruction bundles of rank two over 
the moduli spaces $\CM_{v^+,2}(\emptyset,\gamma^2)$ and $\CM_{v^-,2}(\gamma^2,\emptyset)$, so that the algebraic count of the moduli spaces is given by 
$\#(\CM_{v^+,2})^{\bar{\nu}}(\emptyset,\gamma^2)$ or $\#(\CM_{v^-,2})^{\bar{\nu}}(\gamma^2,\emptyset)$, respectively. \end{proof}

\subsection{Exceptional spheres cannot break along hyperbolic orbits}
In this final subsection we want to show how the general idea of local symplectic field theory can be used to prove new results about contact hypersurfaces in symplectic four-manifolds. We emphasize that for the proof of the following statement we do \emph{not} need to exclude odd hyperbolic a priori, since for our result we will only need to study multiple covers of the exceptional sphere and over cylinders over orbits appearing in the neck stretching process. 

\begin{theorem} Assume that the exceptional sphere splits after neck-stretching along the unit cotangent bundle of an oriented Lagrangian into punctured holomorphic curves connected by a collection of closed Reeb orbits $\Gamma$ in $V$ which are all Morse nondegenerate. Then at least one of the orbits in $\Gamma$ must be elliptic. \end{theorem}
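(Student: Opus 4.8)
The plan is to argue by contradiction: assume every orbit in $\Gamma$ is hyperbolic, and derive a contradiction with the computation $\#(\CM_{v,2,1}^1)^{\bar{\nu}}=-\frac{1}{4}$ established earlier. The starting point is that this algebraic count is a symplectic invariant of the exceptional sphere $v$, hence is unchanged under neck-stretching along $V$; by the compactness and gluing picture already described, it must be recovered as a sum of contributions of the broken configurations $\CM_{v^+,2,\ast}(\emptyset,\Gamma)\times\CM_{v^-,2,\ast}(\Gamma,\emptyset)$ over the possible limiting orbit collections $\Gamma$ (here $\Gamma\in\{(\gamma,\gamma),\gamma^2\}$ after the simplifying reduction to a single underlying simple orbit $\gamma$, and more generally a finite sum over the components of $C=\Sigma\cap V$ and their multiple covers). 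So the first step is to set up this neck-stretching/gluing correspondence carefully, exactly as in the proof of the previous theorem, but \emph{without} assuming the orbits are elliptic.

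The second step is to show that if $\gamma$ is hyperbolic then the relevant broken moduli spaces contribute zero, so the sum cannot equal $-\frac14$. The mechanism is the appearance of \emph{bad} orbits: a hyperbolic orbit $\gamma$ with an odd number of negative real eigenvalues has the property that its even multiple covers $\gamma^{2k}$ are bad in the sense of \cite{BM}, and curves asymptotic to bad orbits are excluded from the moduli spaces (equivalently the corresponding $p,q$-variables are not present in $\PP_\gamma$), while for a ``good'' hyperbolic orbit the Conley-Zehnder indices grow linearly, $\CZ(\gamma^k)=k\cdot\CZ(\gamma)$, so that the index and dimension bookkeeping that produced a rank-two obstruction bundle over a two-dimensional moduli space in the elliptic case is destroyed — the expected dimensions of $\CM_{v^\pm,2}(\cdots)$ and $\CM^1_{v^\pm,2,1}(\cdots)$ shift, and the obstruction-bundle Euler-number contribution either vanishes or the moduli space is empty. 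In either sub-case the total contribution of a hyperbolic $\gamma$ to $\#(\CM^1_{v,2,1})^{\bar\nu}$ is $0$, contradicting $-\frac14\neq 0$. Concretely I would run the index count $\ind(u^\pm)$ using $\CZ(\gamma^2)=2\CZ(\gamma)$ (good hyperbolic) in place of the $\pm1$ shift used in the previous theorem, and observe that this forces both $\ind(u^+)$ and $\ind(u^-)$ to have the wrong parity/size for any branched double cover with one branch point to be rigid after cutting down by one descendant.

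The third step is to handle the general case where $C$ has several components and $\Gamma$ may contain several distinct orbits: one splits $v$ into a multi-component configuration, applies the divisor equation to reduce to the single-descendant normalization, and runs the same argument component-by-component; as long as \emph{all} orbits in $\Gamma$ are hyperbolic, every term in the finite sum vanishes for the same reason, so again one cannot reach $-\frac14$. I would also note that no a priori exclusion of odd hyperbolic orbits is needed (as emphasized in the statement) precisely because the bad-orbit phenomenon is exactly what kills those contributions rather than being an obstruction to the construction.

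The main obstacle I expect is making rigorous the claim that the invariant $-\frac14$ is genuinely \emph{computed} by the broken contributions after neck-stretching when the limiting orbits are degenerate-looking from the obstruction-bundle point of view — i.e.\ justifying that the gluing/compactness correspondence of the previous theorem still applies to identify $\#(\CM^1_{v,2,1})^{\bar\nu}$ with the sum over $\Gamma$ even though for hyperbolic $\gamma$ the relevant pieces $\CM_{v^\pm,2}$ may fail the immersed-with-elliptic-orbits hypotheses under which the obstruction bundle was built. The safe route is to observe that one only ever needs the \emph{count} to be invariant (which follows from $\If_v\in\LL$ being well-defined, Theorem~1.17, together with invariance under deformation of $J$ and neck length), and then to argue at the level of indices and bad-orbit combinatorics that a hyperbolic $\Gamma$ simply cannot produce a nonzero rational number here — so the delicate transversality for the hyperbolic pieces never has to be carried out, only shown to be irrelevant.
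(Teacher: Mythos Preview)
Your overall strategy---contradiction with $\#(\CM_{v,2,1}^1)^{\bar\nu}=-\frac14$ via neck-stretching---is the right one, and it is also the paper's strategy. But the mechanism you propose for why hyperbolic orbits contribute zero does not work, and the paper uses a genuinely different (and essential) idea.

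\textbf{Where your argument breaks.} Consider the configuration $\Gamma=(\gamma,\gamma)$ with the descendant on the $v^+$ side: a connected cylinder $u^+\in\CM^1_{v^+,2,1}(\emptyset,(\gamma,\gamma))$ paired with two simple planes $u^-$. The cylinder has $\ind=2$ regardless of whether $\gamma$ is elliptic or hyperbolic (the computation uses only $2c_1(v^+)=1+\CZ(\gamma)$ and cancels $\CZ(\gamma)$ completely), so after the point-plus-descendant condition its virtual index is $0$, matching an unperturbed dimension of $2$. No bad orbit is involved, and there is no parity obstruction. Your proposed dichotomy ``bad orbit kills it or index is wrong'' simply does not apply here. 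Worse, you cannot even speak of an Euler number for this piece: the obstruction-bundle construction over $\CM_{v^\pm,d}$ in Section~1.3 genuinely requires elliptic asymptotics (the input $c_1(N_u)<0$ uses $\#\Gamma_0=0$), so for hyperbolic $\gamma$ you have no bundle whose Euler class you could declare to be zero. Your ``safe route'' in the last paragraph does not escape this: arguing purely at the level of indices and bad orbits leaves the $(\gamma,\gamma)$ cylinder contribution completely unconstrained.

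\textbf{What the paper does instead.} The key move is to place the special marked point on the intersection locus $C=\Sigma\cap V$ rather than on $\Sigma^+$ or $\Sigma^-$. After stretching, the constrained branch point then lives on an \emph{orbit-cylinder level} $\IR\times\gamma$ in the middle, and the contribution is governed by the moduli spaces $\CM^1_{\gamma,2,1}(\Gamma^+,\Gamma^-)$ of branched covers of the orbit cylinder. This buys two things at once. First, the $v^\pm$ levels now consist only of pairs of \emph{simple} planes (no branch points), which are regular; so no obstruction bundle is needed over the $v^\pm$ pieces and the elliptic hypothesis is never invoked there. Second, the entire obstruction lives over orbit-cylinder covers, where the cokernel bundle exists for \emph{any} nondegenerate $\gamma$ by the energy argument of \cite{F2}. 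The vanishing of the contribution for hyperbolic $\gamma$ is then not an index or bad-orbit fact but the nontrivial computation $\Ih^1_{\gamma,1}=0$ from \cite{F3}; additivity of $\CZ$ for hyperbolic orbits enters only to reduce to the single stratum $\Gamma^+=\Gamma^-=(\gamma,\gamma)$. This is the missing idea in your proposal.
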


\begin{proof} After neck-stretching along the contact hypersurface, it is shown in \cite{BEHWZ} that the exceptional sphere breaks along a finite collection $\Gamma$ of closed Reeb orbits on the contact hypersurface. We now want to consider the case where the special point is chosen to lie on the intersection locus $C=\Sigma\cap V=\mp\del\Sigma^{\pm}$ of the exceptional sphere with the stable hypersurface. In this case it follows that, after neck-stretching, the special marked point now lies on one of the orbit cylinders $\IR\times\gamma$ in $\IR\times V$ for $\gamma\in\Gamma$. The corresponding moduli spaces $\CM_{\gamma,2,1}^1(\Gamma^+,\Gamma^-)$ of multiple covers appearing in the boundary of $\CM_{v,2,1}^1$ consists of branched covers of the orbit cylinder with one additional marked point which is required to be a branch point and mapped the special point $(0,0)$ on $\RS\cong\IR\times\gamma$. \\

Now assume that $\gamma$ is hyperbolic. Using the additivity of the Conley-Zehnder index for hyperbolic orbits, it follows that the Fredholm index of a curve in $\CM_{\gamma,d,1}^1(\Gamma^+,\Gamma^-)$ is determined by the Euler characteristic of the underlying punctured curve. While the virtual dimension 
as expected by the Fredholm index continues to be zero for $\CM_{\gamma,2,1}^1((\gamma,\gamma),(\gamma,\gamma))$, it is strictly negative for the moduli spaces 
$\CM_{\gamma,2,1}^1((\gamma,\gamma),\gamma^2)$, $\CM_{\gamma,2,1}^1(\gamma^2,(\gamma,\gamma))$ and $\CM_{\gamma,2,1}^1(\gamma^2,\gamma^2)$. It follows that, after choosing sections in the obstruction bundles over the latter moduli spaces of orbit curves to perturb the Cauchy-Riemann operator, we only need to care about the case when 
$\Gamma^+=\Gamma^-=(\gamma,\gamma)$. \\

Since now both branch points sit over the orbit cylinder of $\gamma$, it follows that we have an obstruction bundle of rank two over the two-dimensional moduli space $\CM_{\gamma,2,1}^1((\gamma,\gamma),(\gamma,\gamma))$, while there is no obstruction bundle for the multiple covers of the other components. While the latter already ensures that we do not need to exclude odd hyperbolic orbits apriori, we now even show that, after perturbing the multiple covers of the orbit cylinder, the latter multi-floor curves cannot contribute. For this we use that the corresponding generating function $\Ih^1_{\gamma,1}$, counting perturbed holomorphic curves in the moduli spaces $(\CM_{\gamma,d,1}^1)^{\bar{\nu}}(\Gamma^+,\Gamma^-)$, was computed in \cite{F3}. Since, for any choice of cokernel bundle sections making the moduli spaces regular, we have $\Ih^1_{\gamma,1}=0$ if $\gamma$ is hyperbolic, it follows that there cannot be a nonzero count of perturbed holomorphic spheres in $\CM_{v,2,1}^1$ when all breaking orbits in $\Gamma$ are hyperbolic. Since the latter contradicts our direct computation from above, we find that at least one orbit in $\Gamma$ must be elliptic. \end{proof}

As an immediate corollary, we find an independent proof of the following result from (\cite{Wel}, theorem 1.3). 

\begin{corollary} Assume that a closed oriented Lagrangian surface $L$ in a closed symplectic four-manifold has a homologically nontrivial intersection with an exceptional sphere $\Sigma$. Then $L$ must be diffeomorphic to $S^2$ or $S^1\times S^1$. \end{corollary}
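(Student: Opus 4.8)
The plan is to deduce this corollary directly from the preceding theorem together with classical results about Lagrangian surfaces and exceptional spheres in symplectic four-manifolds. The key point is that the hypothesis of the theorem —- that after neck-stretching along the unit cotangent bundle $S^*L$ the exceptional sphere $\Sigma$ breaks into punctured curves connected by Morse-nondegenerate Reeb orbits —- can be arranged, and that the theorem's conclusion that at least one breaking orbit is \emph{elliptic} then forces strong topological restrictions on $L$.

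First I would recall that by Weinstein's neighbourhood theorem a tubular neighbourhood of $L$ is symplectomorphic to a neighbourhood of the zero section in $T^*L$, whose boundary is the unit cotangent bundle $V=S^*L$, a contact hypersurface whose Reeb flow (for the standard contact form coming from a chosen metric) is the cogeodesic flow. A homologically nontrivial intersection $[\Sigma]\cdot[L]\neq 0$ in $X$ means, after perturbing $\Sigma$ to meet $L$ transversally and making it $J$-holomorphic for a suitable $J$, that $C=\Sigma\cap V$ is a nonempty union of circles representing a nonzero class in $H_1(V)$. Performing the BEHWZ neck-stretching along $V$, the sphere $v$ (which one may take to be a rigid nicely-embedded representative of $[\Sigma]$, as discussed in subsection 2.1) breaks into punctured holomorphic buildings in the completions of $X^\pm$, connected by a finite collection $\Gamma$ of closed Reeb orbits of the cogeodesic flow, which after a generic choice of metric on $L$ are all Morse nondegenerate. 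The previous theorem then applies and yields that at least one orbit $\gamma\in\Gamma$ is elliptic.

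Next I would invoke the correspondence between elliptic closed Reeb orbits of the cogeodesic flow on $S^*L$ and closed geodesics on $L$ whose linearized Poincar\'e return map is a rotation, i.e.\ has eigenvalues on the unit circle. The existence of such an elliptic closed geodesic is the crucial constraint. By elementary surface topology together with classical results on geodesic flows, if $L$ is a closed oriented surface of genus $g\geq 2$ then (with respect to any metric) every closed geodesic is hyperbolic in the relevant dynamical sense —- this reflects negative curvature phenomena and the fact that for $g\geq 2$ one cannot have an elliptic periodic orbit of the cogeodesic flow, by a theorem going back to work on the linearization of geodesic flows; more precisely, an elliptic closed geodesic would produce, via Birkhoff normal form / averaging, a one-parameter family or strong stability that is incompatible with the topology of the unit tangent bundle of a higher-genus surface. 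Hence $g\leq 1$, so $L$ is diffeomorphic to $S^2$ or $T^2=S^1\times S^1$.

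The main obstacle I expect is \emph{not} the SFT input —- that is handed to us by the previous theorem —- but rather making precise and correctly citing the step that rules out elliptic closed geodesics on higher-genus surfaces, i.e.\ the purely Riemannian/dynamical statement. One must be careful that ``elliptic'' in the SFT sense (eigenvalues of the linearized return map on the unit circle, nondegenerate) matches the dynamical notion for geodesics, and that the genus obstruction is genuinely metric-independent; the cleanest route is to quote Welschinger's original argument or the relevant statement from the theory of geodesic flows on surfaces rather than reprove it. A secondary technical point is verifying that the neck-stretching hypotheses of the theorem —- in particular immersedness of the pieces $v^\pm$ with elliptic orbits where that was assumed, or at least the weaker framework used in subsection 2.4 that does not require excluding odd hyperbolic orbits —- are met for the unit cotangent bundle setup; here one uses that the argument in 2.4 only needs control of multiple covers of the exceptional sphere and of orbit cylinders, both of which are handled by the vanishing result $\Ih^1_{\gamma,1}=0$ for hyperbolic $\gamma$ from \cite{F3} and the direct computation $\#(\CM_{v,2,1}^1)^{\bar\nu}=-\tfrac14$.
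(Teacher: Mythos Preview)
Your overall strategy matches the paper's: apply the preceding theorem to the unit cotangent bundle of $L$ and derive a contradiction when the genus of $L$ is at least two. However, the step where you actually rule out genus $\geq 2$ contains a genuine error.

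You assert that for a closed oriented surface of genus $g\geq 2$, ``(with respect to any metric) every closed geodesic is hyperbolic,'' and you try to justify this via Birkhoff normal form and a claimed topological obstruction on the unit tangent bundle. This statement is false. On any surface, regardless of genus, one can construct Riemannian metrics possessing elliptic closed geodesics (for instance by inserting a small rotationally symmetric bump). There is no metric-independent theorem of the kind you invoke, and the Birkhoff normal form argument you sketch does not produce one. You yourself flag this as the ``main obstacle,'' and indeed it is: as written, the argument does not go through.

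The paper's proof sidesteps this entirely by exploiting the freedom to \emph{choose} the metric on $L$. The Weinstein neighbourhood identifies a tube around $L$ with a disc bundle in $T^*L$, and the Reeb dynamics on $S^*L$ depend on the metric we put on $L$. For $g\geq 2$ one takes the uniformizing metric of constant curvature $-1$; for this specific metric it is a classical fact that the geodesic flow is Anosov, so every closed geodesic is hyperbolic and Morse nondegenerate. Applying the preceding theorem with \emph{this} choice of contact form gives the contradiction. Thus the correct argument needs only the existence of \emph{one} good metric, not a statement about all metrics; once you make that adjustment your proof becomes essentially the paper's.
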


\begin{proof} Here it suffices to observe that any surface of genus greater than one admits a metric (the uniformizing one) where all closed geodesics are hyperbolic and Morse. Since the same holds true for the corresponding closed Reeb orbits on its unit cotangent bundle, it directly follows from our result that the unit cotangent bundle (for the uniformizing metric) and hence the Lagrangian itself (with the properties stated above) cannot exist. \end{proof}

\end{document}